\theoremstyle{plain}
\newtheorem{theorem}{Theorem}[section]
\newaliascnt{lemma}{theorem}
\newtheorem{lemma}[lemma]{Lemma}
\newaliascnt{proposition}{theorem}
\newtheorem{proposition}[proposition]{Proposition}
\newaliascnt{corollary}{theorem}
\newtheorem{corollary}[corollary]{Corollary}
\theoremstyle{definition}
\newaliascnt{definition}{theorem}
\newtheorem{definition}[definition]{Definition}
\newaliascnt{example}{theorem}
\newtheorem{example}[example]{Example}
\newaliascnt{remark}{theorem}
\newtheorem{remark}[remark]{Remark}
\newaliascnt{conjecture}{theorem}
\newtheorem{conjecture}[conjecture]{Conjecture}
\newaliascnt{problem}{theorem}
\newaliascnt{question}{theorem}
\tikzstyle{v} = [circle, draw, inner sep=2pt, minimum size=3pt, fill=black]
\tikzstyle{l} = [rectangle, draw, rounded corners]
\DeclareMathOperator{\rank}{rank}
\newcommand{\qbinom}[2]{\binom{#1}{#2}_{\hspace{-1.4mm}q}}
\title{$q$-deformation of chromatic polynomials and graphical arrangements}
\author[1]{Tongyu Nian\thanks{u487971i@ecs.osaka-u.ac.jp}}
\author[2]{Shuhei Tsujie\thanks{tsujie.shuhei@a.hokkyodai.ac.jp}}
\author[1]{Ryo Uchiumi\thanks{uchiumi.ryou.1xu@ecs.osaka-u.ac.jp}}
\author[1]{Masahiko Yoshinaga\thanks{yoshinaga@math.sci.osaka-u.ac.jp}}
\affil[1]{Department of Mathematics, The University of Osaka, Toyonaka, Osaka 560-0043, Japan.}
\affil[2]{Department of Mathematics, Hokkaido University of Education, Asahikawa, Hokkaido 070-8621, Japan.}
\providecommand{\keywords}[1]
{
  \small	
  \textbf{\textit{Keywords---}} #1
}
\date{}
\begin{document}
\maketitle
\begin{abstract}
    We first observe a mysterious similarity between the braid arrangement and the arrangement of all hyperplanes in a vector space over the finite field $\mathbb{F}_q$. These two arrangements are defined by the determinants of the Vandermonde and the Moore matrix, respectively. These two matrices are transformed to each other by replacing a natural number $n$ with $q^n$ ($q$-deformation). 

In this paper, we introduce the notion of ``$q$-deformation of graphical arrangements'' as certain subarrangements of the arrangement of all hyperplanes over $\mathbb{F}_q$. This new class of arrangements extends the relationship between the Vandermonde and Moore matrices to graphical arrangements. We show that many invariants of the ``$q$-deformation'' behave as ``$q$-deformation'' of invariants of the graphical arrangements. Such invariants include the characteristic (chromatic) polynomial, the Stirling number of the second kind, freeness, exponents, basis of logarithmic vector fields, etc. 
\end{abstract}

\keywords{Graphical arrangements, chromatic polynomial, $q$-analogue,  freeness, finite fields}

\section{Introduction}
\subsection{Mysterious similarities}

It is classically known that there are similarities between subsets of $[n]=\{1, \dots, n\}$ and 
linear subspaces of $\mathbb{F}_q^n$, which is sometimes called the ``$q$-analogue'' \cite{kacquantum}. 
We start with pointing out further similarities between chromatic polynomials for graphs and 
characteristic polynomials for hyperplane arrangements over finite fields. 

A central arrangement $\mathcal{A}$ is a finite collection of linear hyperplanes in a finite dimensional vector space. 
Define the \textbf{intersection lattice} $L(\mathcal{A})$ and the \textbf{characteristic polynomial} $\chi(\mathcal{A},t)$ by 
\begin{align*}
L(\mathcal{A}) \coloneqq \Set{\bigcap_{H \in \mathcal{B}}H | \mathcal{B} \subseteq \mathcal{A}}, \qquad
\chi(\mathcal{A},t) \coloneqq \sum_{X \in L(\mathcal{A})}\mu(X)t^{\dim X}, 
\end{align*}
where $L(\mathcal{A})$ is ordered by the reverse inclusion and $\mu$ denotes the \textbf{Möbius function} on the lattice $L(\mathcal{A})$. 
See \cite{orlik1992arrangements} for details. 

A typical example of an arrangement is the \textbf{braid arrangement} $\mathcal{B}_{\ell}$ in $\mathbb{R}^{\ell}$ whose defining polynomial is the \textbf{Vandermonde determinant}, i.e., 
\begin{align*}
Q(\mathcal{B}_{\ell}) = \prod_{1 \leq i < j \leq \ell}(x_{j}-x_{i})
= \begin{vmatrix}
1 & x_{1} & x_{1}^{2} & \dots & x_{1}^{\ell-1} \\
1 & x_{2} & x_{2}^{2} & \dots & x_{2}^{\ell-1} \\
\vdots & \vdots & \vdots & & \vdots \\
1 & x_{\ell} & x_{\ell}^{2} & \dots & x_{\ell}^{\ell-1} \\
\end{vmatrix}. 
\end{align*}
The characteristic polynomial of the braid arrangement $\mathcal{B}_{\ell}$ is 
\begin{align*}
\chi(\mathcal{B}_{\ell},t) = t(t-1)(t-2) \cdots (t-\ell+1). 
\end{align*}

There are mysterious similarities between the braid arrangements and the arrangements consisting of all hyperplanes in vector spaces over finite fields.  
Let $q$ be a prime power and $\mathbb{F}_{q}$ the finite field of order $q$. 
Define the arrangement $\mathcal{A}_{\mathrm{all}}(\mathbb{F}_{q}^{\ell})$ as the set of all hyperplanes in $\mathbb{F}_{q}^{\ell}$. 
Its defining polynomial is the determinant of the \textbf{Moore matrix}, i.e., 
\begin{align*}
Q\left(\mathcal{A}_{\mathrm{all}}(\mathbb{F}_{q}^{\ell})\right) 
= \prod_{i = 1}^{\ell}\prod_{c_{1}, \dots, c_{i-1} \in \mathbb{F}_{q}}(c_{1}x_{1} + \dots + c_{i-1}x_{i-1} + x_{i})
=\begin{vmatrix}
x_{1} & x_{1}^{q} & x_{1}^{q^{2}} & \dots & x_{1}^{q^{\ell-1}} \\
x_{2} & x_{2}^{q} & x_{2}^{q^{2}} &  \dots & x_{2}^{q^{\ell-1}} \\
\vdots & \vdots & \vdots &  & \vdots \\
x_{\ell} & x_{\ell}^{q} & x_{\ell}^{q^{2}} & \dots & x_{\ell}^{q^{\ell-1}}
\end{vmatrix}. 
\end{align*}
The characteristic polynomial of $\mathcal{A}_{\mathrm{all}}(\mathbb{F}_{q}^{\ell})$ is 
\begin{align*}
\chi\left(\mathcal{A}_{\mathrm{all}}(\mathbb{F}_{q}^{\ell}),t\right) = (t-1)(t-q)(t-q^{2}) \cdots (t-q^{\ell-1}). 
\end{align*}

By formally replacing $q^{k}$ in the expressions of $Q\left(\mathcal{A}_{\mathrm{all}}(\mathbb{F}_{q}^{\ell})\right)$ and $\chi\left(\mathcal{A}_{\mathrm{all}}(\mathbb{F}_{q}^{\ell}),t\right)$ with $k$, we obtain the expressions for $Q(\mathcal{B}_{\ell})$ and $\chi(\mathcal{B}_{\ell},t)$. 
Note that 
$\chi(\mathcal{B}_{\ell}, \ell)=\ell!=|\mathfrak{S}_\ell|$ and 
$\chi\left(\mathcal{A}_{\mathrm{all}}(\mathbb{F}_{q}^{\ell}),q^\ell\right) = (q^\ell-1)(q^\ell-q) \cdots (q^\ell-q^{\ell-1})=|GL_\ell(\mathbb{F}_q)|$. 
It is worth mentioning that the permutation group $\mathfrak{S}_\ell$ is considered as the ``$\mathbb{F}_1$-version'' of the general linear group $GL_\ell(\mathbb{F}_q)$ \cite{tits1956}. 

A similar phenomenon can be observed in the context of the freeness of arrangements. 
Let $\mathcal{A}=\{H_1, \dots, H_n\}$ be an arrangement of hyperplanes in $V=\mathbb{K}^\ell$. 
Let $\alpha_i: V\to\mathbb{K}$ be a linear form such that $H_i=\alpha_i^{-1}(0)$. The module 
of logarithmic polynomial vector fields $D(\mathcal{A})$ is defined as 
\[
D(\mathcal{A})=
\Set{\theta=\sum_{i=1}^\ell f_i\partial_i| \theta\alpha_i\in(\alpha_i),\ 1\leq i\leq n}, 
\]
where $f_i\in S=\mathbb{K}[x_1, \dots, x_\ell]$ and $\partial_i=\frac{\partial}{\partial x_i}$. 
An arrangement $\mathcal{A}$ is said to be \textbf{free} if 
%the module of logarithmic polynomial vector fields 
$D(\mathcal{A})$ is a free module over the polynomial ring (See \cite{orlik1992arrangements} for details). 
Both of $\mathcal{B}_{\ell}$ and $\mathcal{A}_{\mathrm{all}}(\mathbb{F}_{q}^{\ell})$ are free with bases 
%\begin{align*}
%&\sum_{i=1}^{\ell}x_{i}^{k}\partial_{i} \, (0 \leq k \leq \ell-1) \text{ for } D(\mathcal{B}_{\ell}) \text{ and }
%\sum_{i=1}^{\ell}x_{i}^{q^{k}}\partial_{i} \, (0 \leq k \leq \ell-1) \text{ for } D\left(\mathcal{A}_{\mathrm{all}}(\mathbb{F}_{q}^{\ell})\right), 
%\end{align*}
\begin{align*}
&\Set{\sum_{i=1}^{\ell}x_{i}^{k}\partial_{i} | 0 \leq k \leq \ell-1} \text{ for } D(\mathcal{B}_{\ell}) \text{ and } \\
&\Set{\sum_{i=1}^{\ell}x_{i}^{q^{k}}\partial_{i} | 0 \leq k \leq \ell-1} \text{ for } D\left(\mathcal{A}_{\mathrm{all}}(\mathbb{F}_{q}^{\ell})\right) 
\end{align*}
%where $\partial_{i}$ denotes $\frac{\partial}{\partial x_{i}}$ for each $i \in \{1, \dots, \ell\}$ 
(See \cite[Example 4.22 and 4.24]{orlik1992arrangements}). 

Moreover, there are mysterious similarities for subarrangements. 
Let $G$ be a simple graph on $[\ell] = \{1, \dots, \ell\}$. 
Define the \textbf{graphical arrangement} $\mathcal{A}_{G}$ in $\mathbb{R}^{\ell}$ by 
\begin{align*}
\mathcal{A}_{G} \coloneqq \Set{ \{x_{i}-x_{j} = 0\} | \{i,j\} \in E_{G}}. 
\end{align*}
Note that every subarrangement of $\mathcal{B}_{\ell}$ is of the form $\mathcal{A}_{G}$ and it is well known that the chromatic polynomial $\chi(G,t)$ coincides with the characteristic polynomial $\chi(\mathcal{A}_{G},t)$. 
The following proposition for the chromatic polynomial is trivial by definition. 
\begin{proposition}\label{If k colors are not enough then fewer colors are not enough}
Suppose $\chi(G,k) = 0$ for some $k \in \mathbb{Z}_{\geq 0}$. 
Then $\chi(G,j) = 0$ for $0 \leq j \leq k$. 
\end{proposition}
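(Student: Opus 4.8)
The plan is to set aside the algebraic definition of $\chi(G,t)$ through the Möbius function and instead exploit its combinatorial meaning: for a nonnegative integer $t$, the value $\chi(G,t)$ equals the number of proper colorings of the vertices of $G$ using a palette of $t$ colors, i.e. the number of maps $c\colon [\ell]\to\{1,\dots,t\}$ satisfying $c(i)\neq c(j)$ whenever $\{i,j\}\in E_G$. That this counting function agrees with the polynomial $\chi(G,t)$ at nonnegative integers is the classical interpretation of the chromatic polynomial, which I will take as known; via the identification $\chi(\mathcal{A}_G,t)=\chi(G,t)$ recalled just above, this is the same object as the characteristic polynomial of the graphical arrangement.

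The key step is a monotonicity observation. Fix integers $0\le j\le k$. Regarding the palette $\{1,\dots,j\}$ as a subset of $\{1,\dots,k\}$, every proper $j$-coloring is in particular a proper $k$-coloring, and this inclusion is injective on the set of colorings. Hence the number of proper $j$-colorings is at most the number of proper $k$-colorings, so that
\[
\chi(G,j)\le\chi(G,k).
\]
Since both sides count finite sets, they are in fact nonnegative integers.

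The proposition then follows immediately, and it is cleanest to argue by contraposition. Suppose $\chi(G,j)\neq 0$ for some $j$ with $0\le j\le k$; then $\chi(G,j)\ge 1$, so there is at least one proper $j$-coloring of $G$. By the monotonicity step that coloring is also a proper $k$-coloring, whence $\chi(G,k)\ge 1$, contradicting $\chi(G,k)=0$. Therefore $\chi(G,j)=0$ for every $0\le j\le k$. I expect essentially no obstacle here—the statement is flagged as trivial by definition—and the only point deserving a word of care is the passage from the Möbius-theoretic definition to the coloring count, which is precisely the well-known fact $\chi(\mathcal{A}_G,t)=\chi(G,t)$ invoked in the text preceding the statement.
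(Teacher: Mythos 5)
Your proof is correct and matches the paper's intent: the paper gives no explicit argument, calling the statement ``trivial by definition'' precisely because $\chi(G,t)$ counts proper colorings, which is the interpretation you invoke. Your monotonicity observation (every proper $j$-coloring is a proper $k$-coloring for $j\le k$) is exactly the implicit reasoning, so this is essentially the same approach, just written out in full.
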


There is a $q$-version of Proposition \ref{If k colors are not enough then fewer colors are not enough}. 

\begin{proposition}[{\cite[Lemma 7]{yoshinaga2007free-potjasams}}]
Let $\mathcal{A}$ be an arrangement in $\mathbb{F}_{q}^{\ell}$. 
If $\chi(\mathcal{A},q^{k}) = 0$ for some $k \in \mathbb{Z}_{\geq 0}$, then $\chi(\mathcal{A},q^{j}) = 0$ for any $0 \leq j \leq k$. 
\end{proposition}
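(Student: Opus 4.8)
The plan is to prove this by exhibiting, for each $k$ with the property that $\chi(\mathcal{A}, q^k) = 0$, a geometric reason coming from counting points over the finite field. The key tool is the finite-field interpretation of the characteristic polynomial: for an arrangement $\mathcal{A}$ defined over $\mathbb{F}_q$, the value $\chi(\mathcal{A}, q^m)$ counts the number of points in $\mathbb{F}_{q^m}^\ell$ (or equivalently in a suitable quotient) that avoid all the hyperplanes of $\mathcal{A}$, by the theorem of Crapo--Rota / Athanasiadis. Since $\mathcal{A}$ lives in $\mathbb{F}_q^\ell$, each hyperplane $H_i = \alpha_i^{-1}(0)$ is cut out by a linear form $\alpha_i$ with coefficients in $\mathbb{F}_q$, and we may consider the complement $M(\mathcal{A}) = \mathbb{F}_{q^m}^\ell \setminus \bigcup_i H_i(\mathbb{F}_{q^m})$, whose cardinality equals $\chi(\mathcal{A}, q^m)$.

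First I would set up this point-count identity carefully, noting that because the defining forms $\alpha_i$ are $\mathbb{F}_q$-linear, each hyperplane $H_i$ is automatically an $\mathbb{F}_{q^m}$-vector subspace for every $m$, so the complement $M(\mathcal{A})(\mathbb{F}_{q^m})$ is a well-defined subset stable under scalar multiplication by $\mathbb{F}_{q^m}^\times$. The decisive structural observation is the tower of field extensions: since $\mathbb{F}_{q^j} \subseteq \mathbb{F}_{q^k}$ whenever $j \leq k$, there is a natural inclusion
\begin{align*}
M(\mathcal{A})(\mathbb{F}_{q^j}) \subseteq M(\mathcal{A})(\mathbb{F}_{q^k}),
\end{align*}
obtained by viewing $\mathbb{F}_{q^j}^\ell$ as a subset of $\mathbb{F}_{q^k}^\ell$; a point of the smaller space avoids all the $\mathbb{F}_q$-defined hyperplanes precisely when it avoids them in the larger space, because membership in $H_i$ is governed by the same vanishing condition $\alpha_i = 0$ regardless of the ambient field.

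The conclusion then follows by a monotonicity argument on cardinalities. From the inclusion above we get $\chi(\mathcal{A}, q^j) = |M(\mathcal{A})(\mathbb{F}_{q^j})| \leq |M(\mathcal{A})(\mathbb{F}_{q^k})| = \chi(\mathcal{A}, q^k)$ for all $0 \leq j \leq k$. Hence if $\chi(\mathcal{A}, q^k) = 0$, then every intermediate value $\chi(\mathcal{A}, q^j)$ is a nonnegative integer bounded above by zero, forcing $\chi(\mathcal{A}, q^j) = 0$ for all $0 \leq j \leq k$. This mirrors exactly the chromatic case, where the analogous monotonicity is that a proper coloring with fewer colors embeds into one with more colors.

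The main obstacle I anticipate is justifying the finite-field point-count formula in the precise form needed, namely that $\chi(\mathcal{A}, q^m)$ counts complement points over the extension field $\mathbb{F}_{q^m}$ rather than only over $\mathbb{F}_q$ itself. The standard Crapo--Rota theorem gives the count over $\mathbb{F}_q$ for arrangements whose characteristic polynomial has the right reduction behavior; here I would invoke that the intersection lattice $L(\mathcal{A})$ and its Möbius function are unchanged under base extension from $\mathbb{F}_q$ to $\mathbb{F}_{q^m}$ (since all flats are defined over $\mathbb{F}_q$), so the same $\chi(\mathcal{A}, t)$ simultaneously computes the point count over every $\mathbb{F}_{q^m}$ upon substituting $t = q^m$. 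Verifying this base-change invariance of the lattice, and confirming that the forms $\alpha_i$ being $\mathbb{F}_q$-rational is exactly what makes the nested-field inclusion work, is the technical heart of the argument; once it is in place the monotonicity step is immediate.
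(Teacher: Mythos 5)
Your overall strategy --- interpret $\chi(\mathcal{A},q^m)$ as a point count and then deduce the result from monotonicity of point counts under inclusions --- is the same idea as the paper's proof, but the step you build the monotonicity on is false. You assert that $\mathbb{F}_{q^j}\subseteq\mathbb{F}_{q^k}$ whenever $j\leq k$. For finite fields this fails: $\mathbb{F}_{q^j}$ is a subfield of $\mathbb{F}_{q^k}$ if and only if $j$ divides $k$ (for instance $\mathbb{F}_{q^2}\not\subseteq\mathbb{F}_{q^3}$). As written, your argument only yields the conclusion for those $j$ dividing $k$, which is strictly weaker than the proposition; e.g.\ it cannot deduce $\chi(\mathcal{A},q^2)=0$ from $\chi(\mathcal{A},q^3)=0$. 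Since this tower of inclusions is the ``decisive structural observation'' your proof hinges on, this is a genuine gap, not a cosmetic one.

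The gap is repairable, and the repair is essentially what the paper does. Because every defining form $\alpha_i$ has coefficients in $\mathbb{F}_q$, membership in $H_i$ is an $\mathbb{F}_q$-linear condition: for any injective $\mathbb{F}_q$-linear map $\phi\colon\mathbb{F}_{q^j}\to\mathbb{F}_{q^k}$ (which exists for all $j\leq k$, since these are $\mathbb{F}_q$-vector spaces of dimensions $j$ and $k$), applying $\phi$ coordinatewise gives $\alpha_i(\phi(v_1),\dots,\phi(v_\ell))=\phi\bigl(\alpha_i(v_1,\dots,v_\ell)\bigr)$, so $\phi$ maps complement points injectively to complement points, and monotonicity follows for all $j\leq k$. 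In other words, one must forget the field structure of $\mathbb{F}_{q^m}$ and keep only its $\mathbb{F}_q$-vector-space structure $\mathbb{F}_q^m$. The paper formalizes exactly this by replacing the base field extension with the subspace arrangement $\mathcal{A}\otimes\mathbb{F}_q^k$ in $(\mathbb{F}_q^k)^\ell$, whose complement is counted by $\chi(\mathcal{A},q^k)$ via the point-count formula of Bj\"orner--Ekedahl for \emph{subspace} arrangements, rather than the Crapo--Rota/Athanasiadis formula for hyperplane arrangements over an extension field that you invoke. With that substitution --- $\mathbb{F}_q$-linear embeddings in place of field inclusions --- your proof goes through; the rest of your argument (base-change invariance of the intersection lattice, nonnegativity of the counts, and the final monotonicity step) is sound.
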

\begin{proof}
Let $\mathcal{A} \otimes \mathbb{F}_{q}^{k}$ denote the subspace arrangement in $(\mathbb{F}_{q}^{k})^{\ell}$ defined by 
\begin{align*}
\mathcal{A} \otimes \mathbb{F}_{q}^{k} \coloneqq \Set{ H \otimes_{\mathbb{F}_{q}} \mathbb{F}_{q}^{k} | H \in \mathcal{A}}. 
\end{align*}
Then the intersection lattices $L(\mathcal{A})$ and $L(\mathcal{A}\otimes\mathbb{F}_{q}^{k}) = \Set{X \otimes_{\mathbb{F}_{q}} \mathbb{F}_{q}^{k} | X \in L(\mathcal{A})}$ are naturally isomorphic. 
By \cite[Proposition 3.1]{bjorner1997subspace-aim}, 
\begin{align*}
\chi(\mathcal{A},q^{k}) = \chi(\mathcal{A}\otimes\mathbb{F}_{q}^{k},q^{k}) 
= \# \left((\mathbb{F}_{q}^{k})^{\ell} \setminus \bigcup_{H \in \mathcal{A}}H\otimes_{\mathbb{F}_{q}}\mathbb{F}_{q}^{k}\right). 
\end{align*}
Thus, if $\chi(\mathcal{A},q^{k})=0$ and $0 \leq j \leq k$, then $\chi(\mathcal{A},q^{j})=0$. 
\end{proof}

We define the \textbf{falling factorial} $t^{\underline{i}}$ for each $i \in \mathbb{Z}_{>0}$ by $t^{\underline{i}} \coloneqq t(t-1) \cdots (t-i+1)$. 
Note that the falling factorial $t^{\underline{\ell}}$ coincides with the characteristic polynomial $\chi(\mathcal{B}_{\ell},t)$. 
\begin{proposition}[{\cite[Theorem 15]{read1968introduction-joct}}]\label{stable partition}
Suppose $\chi(G, t) = \sum_{i=1}^{\ell}c_{i}t^{\underline{i}}$. 
Then $c_{i}$ coincides with the number of stable partitions of $G$ into $i$ blocks, where a stable partition of $G$ is a set partition of the vertex set such that no edge connects vertices within the same block.
In other words, $c_{i}$ coincides with the number of $i$-dimensional subspaces in $L(\mathcal{B}_{\ell})$ that are not contained in any hyperplanes in $\mathcal{A}_{G}$.
\end{proposition}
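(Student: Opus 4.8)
The plan is to count proper $t$-colorings of $G$ by grouping them according to the set partition they induce. Recall that $\chi(G,t)$ equals the number of maps $c\colon [\ell]\to\{1,\dots,t\}$ with $c(a)\neq c(b)$ whenever $\{a,b\}\in E_{G}$. Every such $c$ determines a partition $\pi$ of $[\ell]$ whose blocks are the color classes $c^{-1}(\text{color})$, and $c$ is proper precisely when no edge of $G$ lies inside a single block, i.e., when $\pi$ is a stable partition. So the strategy is to partition the set of proper colorings according to this induced $\pi$ and count each fibre.

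First I would fix a stable partition $\pi$ with exactly $i$ blocks and count the proper colorings inducing it. Such colorings are in bijection with injective maps from the set of $i$ blocks into $\{1,\dots,t\}$: the color is constant on each block, and distinct blocks must receive distinct colors (otherwise merging them would give a coarser partition, contradicting that $\pi$ is the \emph{induced} one). There are exactly $t(t-1)\cdots(t-i+1)=t^{\underline{i}}$ such injections. Summing over all stable partitions and collecting terms by the number of blocks yields
\[
\chi(G,t)=\sum_{i=1}^{\ell}a_{i}\,t^{\underline{i}},
\]
where $a_{i}$ denotes the number of stable partitions of $G$ into $i$ blocks. Because the falling factorials $t^{\underline{1}},\dots,t^{\underline{\ell}}$ have pairwise distinct degrees, they are linearly independent over $\mathbb{Q}$; comparing this expansion with the hypothesized one $\chi(G,t)=\sum_{i}c_{i}t^{\underline{i}}$ forces $c_{i}=a_{i}$, which is the first assertion.

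For the second assertion I would invoke the standard dictionary between $L(\mathcal{B}_{\ell})$ and set partitions of $[\ell]$. Each subspace $X\in L(\mathcal{B}_{\ell})$ is an intersection of hyperplanes $\{x_{a}=x_{b}\}$ and hence determines a partition $\pi_{X}$ in which $a,b$ lie in the same block iff $x_{a}=x_{b}$ holds identically on $X$; this assignment is a bijection under which $\dim X$ equals the number of blocks of $\pi_{X}$. A hyperplane $\{x_{a}=x_{b}\}\in\mathcal{A}_{G}$, coming from an edge $\{a,b\}\in E_{G}$, contains $X$ exactly when $a$ and $b$ share a block of $\pi_{X}$. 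Therefore $X$ is contained in no hyperplane of $\mathcal{A}_{G}$ if and only if every edge of $G$ joins vertices in distinct blocks of $\pi_{X}$, i.e., if and only if $\pi_{X}$ is stable. Under the bijection the $i$-dimensional such subspaces correspond precisely to stable partitions into $i$ blocks, whose count is $c_{i}$ by the first part.

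The argument is elementary, so I do not expect a serious obstacle; the one point deserving care is the lattice dictionary itself. One must verify both that $\dim X$ genuinely equals the number of blocks of $\pi_{X}$ (so the dimension indexing matches the block count) and that distinct partitions yield distinct subspaces, i.e., that the correspondence with the partition lattice is a true bijection rather than merely a surjection. Once that is in hand, the two halves of the statement are just the combinatorial count and its geometric reinterpretation.
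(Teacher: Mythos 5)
Your proof is correct. Note that the paper itself does not prove \Cref{stable partition} at all: it is quoted from Read's paper \cite{read1968introduction-joct}, and your argument is essentially the classical one from that source. Both halves of your write-up are sound: fibering the proper $t$-colorings over their induced color-class partition gives exactly $t^{\underline{i}}$ colorings per stable partition with $i$ blocks (two blocks receiving the same color would contradict that the partition is the induced one), and linear independence of the falling factorials, which have distinct degrees, pins down the coefficients; the second assertion then follows from the standard isomorphism of $L(\mathcal{B}_{\ell})$ with the partition lattice, under which $\dim X_{\pi}$ equals the number of blocks of $\pi$ and containment in $\{x_{a}=x_{b}\}$ corresponds to $a,b$ sharing a block. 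The instructive comparison is with how the paper proves the $q$-analogue, \Cref{q coefficients}: there the authors do \emph{not} count directly but run a double induction via deletion--restriction on subarrangements of $\mathcal{A}_{\mathrm{all}}(\mathbb{F}_{q}^{\ell})$, because over $\mathbb{F}_{q}$ the role of ``partitions'' is played by arbitrary subspaces and a clean coloring-style bijection is not available (the paper's later \Cref{q stable parition} has to work with reduced row echelon forms and a group action to extract the stable-partition count modulo $q-1$). So your direct counting is the more elementary route for the classical statement, while the paper's inductive technique is the one that survives $q$-deformation; it would be a good exercise to check that your argument specializes from theirs by replacing subspaces with partitions.
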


Next, define the polynomial $t_{q}^{\underline{i}}$ by 
\begin{align*}
t_{q}^{\underline{i}} \coloneqq \chi(\mathcal{A}_{\mathrm{all}}(\mathbb{F}_{q}^{i}),t) = (t-1)(t-q) \cdots (t-q^{i-1}). 
\end{align*}
The following is a $q$-version of \Cref{stable partition}. 
\begin{proposition}\label{q coefficients}
Let $\mathcal{A}$ be an arrangement in $\mathbb{F}_{q}^{\ell}$ and suppose 
$\chi(\mathcal{A},t) = \sum_{i=0}^{\ell}c_{i}t_{q}^{\underline{i}}$. 
Then $c_{i}$ is the number of $i$-dimensional subspaces in $\mathbb{F}_{q}^{\ell}$ that are not contained in any hyperplanes in $\mathcal{A}$. 
\end{proposition}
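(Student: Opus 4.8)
The plan is to mimic the bijective proof of \Cref{stable partition}, replacing ``stable partitions'' and ``injective colorings'' by their linear-algebraic $q$-analogues and then comparing polynomials. The starting point is the finite-field counting formula recalled in the proof of the previous proposition: for every $k \ge 0$,
\[
\chi(\mathcal{A}, q^k) = \#\Set{ M \in (\mathbb{F}_q^k)^\ell | M \notin \bigcup_{H\in\mathcal{A}} H\otimes_{\mathbb{F}_q}\mathbb{F}_q^k}.
\]
I would record an element $M = (v_1, \dots, v_\ell)$ of $(\mathbb{F}_q^k)^\ell$ as the $\ell\times k$ matrix whose $i$-th row is $v_i$, so that for $H = \ker\alpha_H$ with $\alpha_H = \sum_i a_i x_i$ the membership $M \in H\otimes\mathbb{F}_q^k$ reads $\sum_i a_i v_i = 0$, that is, $a^{\mathsf T}M = 0$ where $a = (a_1, \dots, a_\ell)$. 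Hence $M$ lies in the complement above if and only if none of the normal vectors $a_H$ belongs to the left null space $\ker(M^{\mathsf T})$ of $M$.

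First I would translate this avoidance condition into a statement about a single subspace of $\mathbb{F}_q^\ell$. Setting $X \coloneqq \operatorname{im}(M) \subseteq \mathbb{F}_q^\ell$ (the column space), we have $X^\perp = \ker(M^{\mathsf T})$ under the standard pairing, and $a_H \notin X^\perp$ is exactly the statement $X \not\subseteq H$. Therefore $M$ is counted above if and only if its column space $X$ is contained in no hyperplane of $\mathcal{A}$, and moreover $\dim X = \rank M$.

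Next I would count, for a fixed $i$-dimensional subspace $X \subseteq \mathbb{F}_q^\ell$, the number of matrices $M \in (\mathbb{F}_q^k)^\ell$ with $\operatorname{im}(M) = X$. The $k$ columns of $M$ are vectors of $X$ that span it, so such $M$ correspond bijectively to surjective linear maps $\mathbb{F}_q^k \twoheadrightarrow X$; dualizing these to injective maps $X^\ast \hookrightarrow (\mathbb{F}_q^k)^\ast$, their number is $(q^k - 1)(q^k - q)\cdots(q^k - q^{i-1}) = t_q^{\underline{i}}\big|_{t = q^k}$, depending only on $i$. Grouping the matrices counted above according to the dimension $i$ of their column space then gives
\[
\chi(\mathcal{A}, q^k) = \sum_{i=0}^{\ell} c_i'\, t_q^{\underline{i}}\big|_{t=q^k},
\]
where $c_i'$ is the number of $i$-dimensional subspaces of $\mathbb{F}_q^\ell$ contained in no hyperplane of $\mathcal{A}$.

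Finally I would upgrade this numerical identity to an identity of polynomials. Both $\chi(\mathcal{A},t)$ and $\sum_i c_i' t_q^{\underline{i}}$ are polynomials in $t$ of degree at most $\ell$, and they agree at the infinitely many points $t = q^k$ for $k \ge 0$; hence they coincide as polynomials. Since the $t_q^{\underline{i}}$ for $0 \le i \le \ell$ have strictly increasing degrees, they form a basis of the space of polynomials of degree $\le \ell$, so the expansion is unique and $c_i = c_i'$, as claimed. I expect the only genuinely delicate step to be the double bookkeeping in the middle two paragraphs: correctly matching the avoidance condition with ``column space not contained in a hyperplane'' through the orthogonal complement, and verifying that the matrices with prescribed column space are enumerated by the surjection count $t_q^{\underline{i}}\big|_{t=q^k}$. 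Once these are in place, the polynomial comparison is routine.
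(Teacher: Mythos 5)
Your proof is correct, but it takes a genuinely different route from the paper's. The paper argues by double induction on $\ell$ and on $|\mathcal{A}_{\mathrm{all}}(\mathbb{F}_q^\ell)\setminus\mathcal{A}|$: starting from $\mathcal{A}=\mathcal{A}_{\mathrm{all}}(\mathbb{F}_q^\ell)$, where $\chi=t_q^{\underline{\ell}}$, it removes one hyperplane $H$ at a time, applies the deletion-restriction formula $\chi(\mathcal{A}',t)=\chi(\mathcal{A},t)+\chi(\mathcal{A}^H,t)$, and matches this against the set-theoretic decomposition of the $i$-dimensional subspaces avoiding $\mathcal{A}'=\mathcal{A}\setminus\{H\}$ into those avoiding all of $\mathcal{A}$ and those contained in $H$ but avoiding every $H\cap K$. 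You instead give a direct enumeration: you start from the point-counting formula $\chi(\mathcal{A},q^k)=\#\bigl((\mathbb{F}_q^k)^\ell\setminus\bigcup_{H\in\mathcal{A}}H\otimes_{\mathbb{F}_q}\mathbb{F}_q^k\bigr)$ of \cite{bjorner1997subspace-aim} (which the paper itself invokes in the proof of the $q$-version of \Cref{If k colors are not enough then fewer colors are not enough}), stratify the counted $\ell\times k$ matrices by their column space $X\subseteq\mathbb{F}_q^\ell$, observe that the avoidance condition depends only on $X$ via $\ker M^{\mathsf{T}}=X^\perp$, count the fibre over a fixed $i$-dimensional $X$ by the surjection count $(q^k-1)(q^k-q)\cdots(q^k-q^{i-1})=t_q^{\underline{i}}\big|_{t=q^k}$, and conclude by polynomial interpolation together with uniqueness of the expansion in the basis $\{t_q^{\underline{i}}\}_{i=0}^{\ell}$. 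All steps check out, including the edge cases: for $i>k$ both the fibre count and $t_q^{\underline{i}}\big|_{t=q^k}$ vanish, and for $i=0$ the zero subspace is counted exactly when $\mathcal{A}=\varnothing$, consistent with $M=0$ lying in every $H\otimes\mathbb{F}_q^k$. As for what each approach buys: the paper's induction stays entirely inside arrangement and lattice theory, needs no external counting result beyond deletion-restriction, and runs parallel to the classical proof of \Cref{stable partition}; your argument explains combinatorially \emph{why} the coefficients count subspaces --- each point of the complement corresponds to a surjection onto the subspace it spans --- and it yields essentially for free the matrix interpretation of $c_i$ that the paper re-derives later in the proof of \Cref{q stable parition}. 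The only cost is the reliance on the finite-field counting formula for subspace arrangements, but since the paper already uses that result, this is a legitimate ingredient here.
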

\begin{proof}
We proceed by double induction on $\ell$ and $|\mathcal{A}_{\mathrm{all}}(\mathbb{F}_{q}^{\ell})\setminus \mathcal{A}|$. 
When $\ell = 1$, 
\begin{align*}
t = (t-1) + 1 &= t^{\underline{1}}_{q} + t^{\underline{0}}_{q} & (\text{for the empty arrangement}),\\
t-1 &= t^{\underline{1}}_{q} & (\text{for the single-point arrangement}). 
\end{align*}
Therefore the claim is true. 

Suppose that $\ell \geq 2$. 
Since $\chi(\mathcal{A}_{\mathrm{all}}(\mathbb{F}_{q}^{\ell}),t) = t^{\underline{\ell}}_{q}$, the assertion is true for $\mathcal{A} = \mathcal{A}_{\mathrm{all}}(\mathbb{F}_{q}^{\ell})$. 
Assume that $\mathcal{A} \subseteq \mathcal{A}_{\mathrm{all}}(\mathbb{F}_{q}^{\ell})$ and $H \in \mathcal{A}$. 
Then, by the deletion-restriction formula \cite[Corollary 2.57]{orlik1992arrangements}, we have  
\begin{align*}
\chi(\mathcal{A}^{\prime}, t) &= \chi(\mathcal{A},t) + \chi(\mathcal{A}^{H},t) 
= \sum_{i=0}^{\ell}c_{i}t^{\underline{i}}_{q} + \sum_{i=0}^{\ell-1}d_{i}t^{\underline{i}}_{q} 
= t^{\underline{\ell}}_{q} + \sum_{i=0}^{\ell-1} (c_{i}+d_{i})t^{\underline{i-1}}_{q}. 
\end{align*}
By the induction hypothesis 
\begin{align*}
c_{i} &= \# \Set{X \in L(\mathcal{A}_{\mathrm{all}}(\mathbb{F}_{q}^{\ell})) | \dim X =i,\ X \not\subseteq K \text{ for any } K \in \mathcal{A} },  \\
d_{i} &= \# \Set{X \in L(\mathcal{A}_{\mathrm{all}}(H)) | \dim X =i,\ X \not\subseteq H \cap K \text{ for any } K \in \mathcal{A}\setminus\{H\} }.  
\end{align*}
Since 
\begin{multline*}
\Set{X \in L(\mathcal{A}_{\mathrm{all}}(\mathbb{F}_{q}^{\ell})) | \dim X = i,\ X \not\subseteq K \text{ for any } K \in \mathcal{A}\setminus\{H\} } \\
= \Set{X \in L(\mathcal{A}_{\mathrm{all}}(\mathbb{F}_{q}^{\ell})) | \dim X =i,\ X \not\subseteq K \text{ for any } K \in \mathcal{A} } \\
\sqcup \Set{X \in L(\mathcal{A}_{\mathrm{all}}(H)) | \dim X =i,\ X \not\subseteq H \cap K \text{ for any } K \in \mathcal{A}\setminus\{H\} }, 
\end{multline*}
the claim follows. 
\end{proof}
Some of combinatorial sequences and their $q$-analogues have 
interpretations in terms of characteristic polynomials of hyperplane 
arrangements over $\mathbb{F}_q$. 
\begin{example}
Consider the empty arrangement in $\mathbb{F}_{q}^{\ell}$. 
Then we have the following well-known formula (See \cite[Expression 4.4]{kacquantum}). 
\begin{align*}
t^{\ell} = \sum_{i=0}^{\ell} \qbinom{\ell}{i} t^{\underline{i}}_{q}, 
\end{align*}
where
\begin{align*}
\qbinom{\ell}{i} = \dfrac{[\ell]_{q} [\ell-1]_{q} \cdots [\ell-i+1]_{q}}{[i]_{q} [i-1]_{q} \cdots [1]_{q}}
\end{align*}
is the $q$-binomial coefficient, which is equal to the number of $i$-dimensional subspaces of $\mathbb{F}_q^\ell$, and $[k]_{q} = \frac{q^{k}-1}{q-1}$ denotes the $q$-integer.  
Note that $t^\ell$ is the characteristic polynomial 
of the empty arrangement in $\mathbb{F}_q^\ell$, 
and $t^{\underline{i}}_{q}$ is the characteristic 
polynomial of $\mathcal{A}_{\mathrm{all}}$ in 
$\mathbb{F}_q^i$. 
\end{example}

\begin{example}
Consider the Boolean arrangement in $\mathbb{F}_{q}^{\ell}$, which consists of all coordinate hyperplanes. 
Define $S_{q}(\ell, i)$ by 
\begin{align*}
(t-1)^{\ell} = \sum_{i=0}^{\ell} (q-1)^{\ell-i}S_{q}(\ell, i)t^{\underline{i}}_{q}, 
\end{align*}
which is called the $q$-Stirling number of the second kind. 
It satisfies the following recurrence formula. 
\begin{align*}
S_{q}(\ell,i) = S_{q}(\ell-1, i-1) + [i]_{q} S_{q}(\ell-1, i), \qquad
S_{q}(0,i) = \delta_{0,i}. 
\end{align*}
It is proved as follows. 
\begin{align*}
(t-1)^{\ell} &= \sum_{i=0}^{\ell-1}(q-1)^{\ell-i-1}S(\ell-1,i)t^{\underline{i}}_{q} (t-1) \\
&= \sum_{i=0}^{\ell-1}(q-1)^{\ell-i-1}S(\ell-1,i)t^{\underline{i}}_{q} (t-q^{i}+q^{i}-1) \\
&= \sum_{i=0}^{\ell-1}(q-1)^{\ell-i-1}S(\ell-1,i)t^{\underline{i+1}}_{q} + \sum_{i=0}^{\ell-1}(q-1)^{\ell-i-1}(q^{i}-1)S_{q}(\ell-1,i)t^{\underline{i}}_{q} \\
&= t^{\underline{\ell}}_{q} + \sum_{i=0}^{\ell -1}(q-1)^{\ell-i} (S_{q}(\ell-1,i-1) + [i]_qS_{q}(\ell-1,i))t^{\underline{i}}_{q}. 
\end{align*}
This is a $q$-version of the following well-known formula for the Stirling number of the second kind. 
\begin{align*}
t^{\ell} = \sum_{i=0}^{\ell}S(\ell,i)t^{\underline{i}}, 
\qquad S(\ell,i) = S(\ell-1, i-1) + i S(\ell-1, i). 
\end{align*}
\begin{remark}
By Proposition \ref{q coefficients}, 
\begin{align*}
S_{q}(\ell,i) = 
\frac{\# \Set{X \in L(\mathcal{A}_{\mathrm{all}}(\mathbb{F}_{q}^{\ell})) | \dim X = i,\ X \not\subseteq \{x_{j}=0\} \text{ for any } j \in [\ell] }}{(q-1)^{\ell-i}}.
\end{align*}
\end{remark}
\end{example}

\subsection{$q$-deformation of graphical arrangements}
We will focus on specific subarrangements of $\mathcal{A}_{\mathrm{all}}(\mathbb{F}_{q}^{\ell})$ that arise from simple graphs.
Note that the braid arrangement $\mathcal{B}_{\ell}$ corresponds to the graphical arrangement associated with the complete graph $K_{\ell}$. 
Thus, it seems natural to assign $K_{\ell}$ to $\mathcal{A}_{\mathrm{all}}(\mathbb{F}_{q}^{\ell})$. 
Furthermore, it seems natural to assign every clique of $G$ to the set of all hyperplanes using the coordinates corresponding to the vertices in the clique. 
\begin{definition}
We define a \textbf{$q$-deformation of graphical arrangement} $\mathcal{A}_{G}^{q}$ in $\mathbb{F}_{q}^{\ell}$ as follows. 
\begin{align*}
\mathcal{A}_{G}^{q} \coloneqq \bigcup_{\{i_{1}, \dots, i_{r}\}} \Set{\{a_{i_{1}}x_{i_{1}} + \dots + a_{i_{r}}x_{i_{r}} = 0\} | (a_{i_{1}}, \dots, a_{i_{r}}) \in \mathbb{F}_{q}^{r}\setminus\{0\}}, 
\end{align*}
where $\{i_{1}, \dots, i_{r}\}$ runs over all cliques of $G$. 
\end{definition}

It is expected that $\mathcal{A}_{G}^{q}$ has a lot of properties similar to the graphical arrangement $\mathcal{A}_{G}$. 
The organization of this paper is as follows. 
In \cref{section:characteristic polynomial}, we will show that the characteristic polynomial of $\mathcal{A}_{G}^{q}$ determines the chromatic polynomial $G$ when $q$ is large enough (\Cref{main theorem} and \Cref{cor}) and we will describe a relationship between the characteristic polynomial and  the numbers of stable partitions of $G$ (\Cref{q stable parition}). 
In \cref{section:freeness}, we will show that $\mathcal{A}_{G}^{q}$ is free if and only if $G$ is chordal as in the case of graphical arrangements (\Cref{freeness for q-graphical arr}). 
In \cref{section:basis}, we construct an explicit basis for $D(\mathcal{A}_{G}^{q})$ for a chordal graph $G$ (\Cref{basis for q-graphical arr}).

\section{Characteristic polynomial}\label{section:characteristic polynomial}

%{\color{red} 
Let $G=(V, E)$ be a graph. One of the most important properties of 
chromatic polynomials is the following deletion-contraction formula 
\[
\chi(G, t)=\chi(G\setminus\{e\}, t)-\chi(G/e, t), 
\]
for $e\in E$. There is a $q$-version of this formula. 
\begin{proposition}
    \label{q-deletion-contraction}
    Let $e=\{i_1, i_2\}\in E$ be an edge such that  
    \begin{enumerate}[label=(\arabic*)]
        \item \label{hyp1} $\{i_1, i_2\}$ is a maximal clique, and 
        \item \label{hyp2} The contraction $G/e$ does not result in a new clique. 
    \end{enumerate}
    Then \[
    \chi(\mathcal{A}_{G}^{q}, t)=\chi(\mathcal{A}_{G\setminus e}^{q}, t)-(q-1)
    \chi(\mathcal{A}_{G/e}^{q}, t). 
    \]
\end{proposition}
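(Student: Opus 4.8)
The plan is to reduce the claimed identity to the Orlik--Terao deletion--restriction formula $\chi(\mathcal{A},t)=\chi(\mathcal{A}\setminus\{H\},t)-\chi(\mathcal{A}^{H},t)$ (\cite[Corollary 2.57]{orlik1992arrangements}), applied iteratively to the hyperplanes that distinguish $\mathcal{A}_{G}^{q}$ from $\mathcal{A}_{G\setminus e}^{q}$. First I would pin down exactly which hyperplanes these are. The clique $\{i_1,i_2\}$ contributes to $\mathcal{A}_{G}^{q}$ the hyperplanes $\{a x_{i_1}+b x_{i_2}=0\}$ with $(a,b)\neq(0,0)$; among them $\{x_{i_1}=0\}$ and $\{x_{i_2}=0\}$ already come from the singleton cliques $\{i_1\},\{i_2\}$, while the remaining $q-1$ hyperplanes $H_1,\dots,H_{q-1}$ (those with $a,b\neq0$) have support exactly $\{i_1,i_2\}$. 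Hypothesis \ref{hyp1} guarantees that $\{i_1,i_2\}$ is the only clique of $G$ containing both $i_1$ and $i_2$, so a hyperplane of support $\{i_1,i_2\}$ arises from no other clique. Since the cliques of $G\setminus e$ are precisely the cliques of $G$ not containing $\{i_1,i_2\}$, it follows that $\mathcal{A}_{G\setminus e}^{q}=\mathcal{A}_{G}^{q}\setminus\{H_1,\dots,H_{q-1}\}$.

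Next I would telescope. Setting $\mathcal{A}_0=\mathcal{A}_{G}^{q}$ and $\mathcal{A}_k=\mathcal{A}_{G}^{q}\setminus\{H_1,\dots,H_k\}$, so that $\mathcal{A}_{q-1}=\mathcal{A}_{G\setminus e}^{q}$, deletion--restriction gives $\chi(\mathcal{A}_{k-1},t)=\chi(\mathcal{A}_k,t)-\chi(\mathcal{A}_{k-1}^{H_k},t)$, and summing over $k$ yields
\[
\chi(\mathcal{A}_{G}^{q},t)=\chi(\mathcal{A}_{G\setminus e}^{q},t)-\sum_{k=1}^{q-1}\chi\bigl(\mathcal{A}_{k-1}^{H_k},t\bigr).
\]
It therefore suffices to prove $\chi(\mathcal{A}_{k-1}^{H_k},t)=\chi(\mathcal{A}_{G/e}^{q},t)$ for each $k$. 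A preliminary observation removes the dependence on the deletions: on $H_k$ we have $x_{i_1}=\lambda_k x_{i_2}$ with $\lambda_k\neq0$, and since distinct $H_j$ force distinct $\lambda_j$, each $H_j$ $(j\neq k)$ as well as $\{x_{i_1}=0\}$ restricts to the single hyperplane $\{x_{i_2}=0\}$ inside $H_k$. Because $\{x_{i_2}=0\}$ is contributed by the singleton clique $\{i_2\}$, which survives in every $\mathcal{A}_{k-1}$, deleting $H_1,\dots,H_{k-1}$ does not alter the set of restricted hyperplanes; hence $\mathcal{A}_{k-1}^{H_k}=(\mathcal{A}_{G}^{q})^{H_k}$ for all $k$.

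The core step is then to identify $(\mathcal{A}_{G}^{q})^{H_k}$ with $\mathcal{A}_{G/e}^{q}$. Eliminating $x_{i_1}=\lambda_k x_{i_2}$ realizes $H_k\cong\mathbb{F}_q^{\ell-1}$ with the coordinate $x_{i_2}$ playing the role of the merged vertex of $G/e$, and I would match hyperplanes clique by clique: cliques of $G$ avoiding $\{i_1,i_2\}$ restrict unchanged and match cliques of $G/e$ avoiding the merged vertex, while a clique through $i_1$ (resp.\ $i_2$) restricts to a hyperplane through the merged coordinate. Here the value of $\lambda_k$ is immaterial, since $c_{i_1}\lambda_k$ ranges over all of $\mathbb{F}_q$ as $c_{i_1}$ does, so each restriction yields literally the same set of hyperplanes, independent of $k$. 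The main obstacle is verifying the exact coincidence of the two hyperplane sets: I must show that every clique of $G/e$ through the merged vertex descends from a clique of $G$ through $i_1$ or through $i_2$, which is exactly what hypothesis \ref{hyp2} provides (the contraction creates no new clique), and conversely that maximality (hypothesis \ref{hyp1}) forces the clique $\{i_1,i_2\}$ to contribute only $\{x_{i_2}=0\}$ after restriction, introducing nothing spurious. Granting this bijection, $(\mathcal{A}_{G}^{q})^{H_k}=\mathcal{A}_{G/e}^{q}$ for every $k$, so the displayed sum collapses to $(q-1)\chi(\mathcal{A}_{G/e}^{q},t)$ and the formula follows.
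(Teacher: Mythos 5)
Your proposal is correct and takes essentially the same approach as the paper: identify the $q-1$ hyperplanes $\{x_{i_1}=\lambda x_{i_2}\}$, $\lambda\in\mathbb{F}_q^{\times}$, that distinguish $\mathcal{A}_{G}^{q}$ from $\mathcal{A}_{G\setminus e}^{q}$ (using hypothesis (1)), observe that the restriction to each of them is $\mathcal{A}_{G/e}^{q}$ (using hypothesis (2)), and apply deletion--restriction $q-1$ times. Your write-up simply makes explicit the details the paper leaves implicit, namely the telescoping sum and the fact that the earlier deletions do not change the restricted arrangement.
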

\begin{proof}
    Without loss of generality, we may assume that $e=\{\ell-1, \ell\}$. Then the difference 
    between $\mathcal{A}_{G\setminus e}^{q}$ and $\mathcal{A}_{G}^{q}$ is $(q-1)$ hyperplanes 
    of the form $\{x_{\ell-1}=\alpha x_\ell\}$ with $\alpha\in\mathbb{F}_q^\times$. 
    Under the hypothesis \ref{hyp1} and \ref{hyp2}, the restricted arrangement on such a hyperplane 
    is $\mathcal{A}_{G/e}^{q}$. Applying the 
    deletion-restriction theorem \cite[Corollary 2.57]{orlik1992arrangements} $(q-1)$ times, 
    we obtain the formula. 
\end{proof}

\begin{example}
Let $P_\ell$ be the path graph on the vertex set $[\ell]$. 
Then a clique is either a vertex or an edge. Hence 
$\mathcal{A}_{P_\ell}^q$ consists of $1+(\ell-1)q$ hyperplanes 
of the form $\{ax_i+bx_{i+1}=0\}$, $i=1, \dots, \ell-1$, $(a, b)\in\mathbb{F}_q^{2}\setminus\{(0, 0)\}$. 
Using \Cref{q-deletion-contraction}, we have  $\chi(\mathcal{A}_{P_\ell}^q, t)=(t-1)(t-q)^{\ell-1}$ 
(Note that $\chi(\mathcal{A}_{P_\ell}, t)=t(t-1)^{\ell-1}$). 
\end{example}

%%%%%%%%%%%%%%%%%%%%%%%%%%%%%%%%%%%%%%%

%{\color{blue}
There is a well-known explicit formula for the chromatic polynomial of a cycle graph $C_\ell$ for $\ell \geq 3$.
\begin{align*}
\chi(\mathcal{A}_{C_{\ell}},t) = (t-1)^{\ell}+(-1)^{\ell}(t-1). 
\end{align*}

The following lemma is a $q$-version of this formula. 
\begin{lemma}\label{characteristic polynomial cycle graph}
Let $\ell \geq 4$. 
Then 
\begin{align*}
\chi(\mathcal{A}_{C_{\ell}}^{q},t) = (t-q)^{\ell}+(-1)^{\ell}(q-1)^{\ell-1}(t-q).
\end{align*}
\end{lemma}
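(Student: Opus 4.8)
The plan is to induct on $\ell$, using the $q$-deletion--contraction formula (\Cref{q-deletion-contraction}) for the inductive step and dispatching the base case $\ell = 4$ by a direct point count over finite fields. The two inputs for the step are the path computation $\chi(\mathcal{A}_{P_\ell}^q, t) = (t-1)(t-q)^{\ell-1}$ (established in the path example) and the observation that removing one edge from $C_\ell$ yields $P_\ell$ while contracting one edge yields $C_{\ell-1}$. The main obstacle is precisely the base case $\ell=4$, where \Cref{q-deletion-contraction} is \emph{not} applicable.

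For the inductive step, suppose $\ell \geq 5$ and pick any edge $e$ of $C_\ell$. Since $C_\ell$ is triangle-free, $e$ is a maximal clique, so hypothesis \ref{hyp1} holds; and the contraction $C_\ell/e \cong C_{\ell-1}$ is again triangle-free because $\ell-1\geq 4$, so no new clique appears and hypothesis \ref{hyp2} holds. As $C_\ell\setminus e = P_\ell$, \Cref{q-deletion-contraction} gives
\[
\chi(\mathcal{A}_{C_\ell}^q, t) = (t-1)(t-q)^{\ell-1} - (q-1)\,\chi(\mathcal{A}_{C_{\ell-1}}^q, t).
\]
Writing $(t-1)(t-q)^{\ell-1} = (t-q)^\ell + (q-1)(t-q)^{\ell-1}$ and inserting the induction hypothesis for $\chi(\mathcal{A}_{C_{\ell-1}}^q, t)$, the two $(q-1)(t-q)^{\ell-1}$ terms cancel, and the sign identity $-(-1)^{\ell-1}=(-1)^\ell$ produces the claimed formula. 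This is a routine manipulation.

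The delicate point is the base case $\ell=4$: contracting an edge of $C_4$ produces the triangle $K_3$, so hypothesis \ref{hyp2} fails and deletion--contraction cannot be used. Instead I would compute $\chi(\mathcal{A}_{C_4}^q, q^k)$ directly. By the finite field count $\chi(\mathcal{A},q^k)=\#\bigl((\mathbb{F}_q^k)^\ell\setminus\bigcup_H H\otimes\mathbb{F}_q^k\bigr)$ (see \cite[Proposition 3.1]{bjorner1997subspace-aim}), and using that the cliques of $C_4$ are exactly its vertices and edges, $\chi(\mathcal{A}_{C_4}^q, q^k)$ is the number of tuples $(v_1,v_2,v_3,v_4)\in\mathbb{F}_{q^k}^4$ (with $\mathbb{F}_{q^k}$ viewed as $\mathbb{F}_q^k$) for which each cyclically consecutive pair $v_i,v_{i+1}$ is $\mathbb{F}_q$-linearly independent. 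This is $\operatorname{tr}(M^4)$ for the transfer matrix $M=J-N$ indexed by nonzero vectors, where $M_{uw}=1$ iff $u,w$ are $\mathbb{F}_q$-independent, $J$ is all-ones, and $N=\bigoplus J_{q-1}$ is block-diagonal with one all-ones block of size $q-1$ per $\mathbb{F}_q$-line. Since $J$ and $N$ commute, one reads off the eigenvalues of $M$: $q^k-q$ with multiplicity $1$, $\,1-q$ with multiplicity $\frac{q^k-q}{q-1}$, and $0$ otherwise. Hence $\operatorname{tr}(M^4)=(q^k-q)^4+(q-1)^3(q^k-q)$, which is the claimed polynomial evaluated at $t=q^k$; as both sides are degree-$4$ polynomials agreeing at infinitely many values $q^k$, they coincide.

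Finally, it is worth remarking that this transfer-matrix computation gives $\operatorname{tr}(M^\ell)=(t-q)^\ell+(-1)^\ell(q-1)^{\ell-1}(t-q)$ at every $t=q^k$, and equals $\chi(\mathcal{A}_{C_\ell}^q,q^k)$ exactly when $C_\ell$ is triangle-free, i.e. when $\ell\geq 4$; this simultaneously explains the uniform formula and the hypothesis $\ell\geq 4$, and could replace the induction altogether. In either route the one hypothesis doing all the work is triangle-freeness: it guarantees the cliques are only vertices and edges (so the count is governed by pairwise $\mathbb{F}_q$-independence) and it is what legitimizes the deletion--contraction step for $\ell\geq 5$.
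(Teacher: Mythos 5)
Your proof is correct, and your inductive step for $\ell\geq 5$ is exactly the paper's: both verify the hypotheses of \Cref{q-deletion-contraction} on an edge of $C_\ell$, use $\chi(\mathcal{A}_{P_\ell}^q,t)=(t-1)(t-q)^{\ell-1}$, and perform the same cancellation. Where you genuinely diverge is the base case $\ell=4$. The paper stays inside deletion--restriction: it introduces the auxiliary arrangement $\mathcal{B}$ in $\mathbb{F}_q^3$ (the true restriction of $\mathcal{A}_{C_4}^q$ to a hyperplane $\{x_3=\alpha x_4\}$, which is \emph{not} $\mathcal{A}_{K_3}^q$ precisely because hypothesis \ref{hyp2} fails), checks that $\mathcal{B}$ is supersolvable with exponents $(1,q,2q-1)$, and applies \cite[Corollary 2.57]{orlik1992arrangements} $q-1$ times. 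You instead count points: using $\chi(\mathcal{A},q^k)=\#\bigl((\mathbb{F}_q^k)^\ell\setminus\bigcup_H H\otimes\mathbb{F}_q^k\bigr)$ (already invoked in the paper's proof of \Cref{main theorem}), you identify $\chi(\mathcal{A}_{C_4}^q,q^k)$ with $\operatorname{tr}(M^4)$ for the independence matrix $M=J-N$ and read off its spectrum; your eigenvalue data ($q^k-q$ once, $1-q$ with multiplicity $\frac{q^k-q}{q-1}$, $0$ otherwise) and the resulting trace are correct, and the polynomial-identity step at $t=q^k$ is sound. The trade-off: the paper's route needs an ad hoc arrangement whose supersolvability and exponents must be verified separately, but never leaves characteristic-polynomial machinery; your route needs the finite-field counting formula and a small spectral computation, but, as you note, $\operatorname{tr}(M^\ell)$ handles every $\ell\geq 4$ at once, so the induction (and hence \Cref{q-deletion-contraction}) could be dropped entirely. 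That uniform version is essentially a special case of the paper's later theorem on triangle-free graphs, whose projective-coloring proof is the same idea in different clothing --- so your remark in effect anticipates that result.
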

\begin{proof}
First consider the arrangement $\mathcal{B}$ in $\mathbb{F}_{q}^{3}$ consisting of the following hyperplanes. 
\begin{align*}
x_{1}=ax_{2},\ x_{2}=ax_{3},\ x_{3}=ax_{1} \quad (a \in \mathbb{F}_{q}).
\end{align*}
Then $\mathcal{B}$ is supersolvable with exponents $(1,q,2q-1)$. 
Using the deletion-restriction theorem \cite[Corollary 2.57]{orlik1992arrangements} $q-1$ times, we have 
\begin{align*}
\chi(\mathcal{A}_{C_{4}}^{q}, t) &= \chi(\mathcal{A}_{P_{4}}^{q}, t) - (q-1)\chi(\mathcal{B},t) \\
&= (t-1)(t-q)^{3} - (q-1)(t-1)(t-q)(t-2q+1) \\
&= (t-1)(t-q)\left((t-q)^{2}-(q-1)(t-2q+1)\right) \\
&= (t-1)(t-q)\left((t-q)^{2}-(q-1)(t-q) +(q-1)^{2}\right) \\
&= (t-1)(t-q)\dfrac{(t-q)^{3}+(q-1)^{3}}{(t-q)+(q-1)} \\
&= (t-q)^{4}+(-1)^{4}(q-1)^{3}(t-1). 
\end{align*}

Now suppose that $\ell \geq 5$. 
Using \Cref{q-deletion-contraction}, we have 
\begin{align*}
\chi(\mathcal{A}_{C_{\ell}}^{q},t) &= \chi(\mathcal{A}_{P_{\ell}}^{q})-(q-1)\chi(\mathcal{A}_{C_{\ell-1}}^{q},t) \\
&= (t-1)(t-q)^{\ell-1}-(q-1)(t-q)^{\ell-1}-(-1)^{\ell-1}(q-1)^{\ell-1}(t-q) \\
&= (t-q)^{\ell}+(-1)^{\ell}(q-1)^{\ell-1}(t-q). 
\end{align*}
\end{proof}
%}
%%%%%%%%%%%%%%%%%%%%%%%%%%%%%%%%%%%%%%%

In many concrete graphs $G$, we observe that 
$\chi(\mathcal{A}_G^q, t)$ is a polynomial in $q$ 
and $t$. 
It is expected that the chromatic polynomial 
$\chi(G, t)$ is recovered from the polynomial 
$\chi(\mathcal{A}_G^q, t)$. 
%However, the polynomiality of 
%$\chi(\mathcal{A}_G^q, t)$ is not clear for 
%general $G$. 
\begin{conjecture}
\label{conj:polynomiality}
The function
$\chi(\mathcal{A}_G^q, t)$ is a polynomial in 
$q$ and in $t$, and the equality
\[
\lim_{q\to 1}\frac{\chi(\mathcal{A}_G^q, q^t)}{(q-1)^\ell}
=\chi(G, t)
\]
holds. 
\end{conjecture}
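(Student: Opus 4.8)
The plan is to prove both halves of \Cref{conj:polynomiality} by induction on the number of edges of $G$, driven by the $q$-deletion--contraction formula of \Cref{q-deletion-contraction} and matched against the classical recursion $\chi(G,t)=\chi(G\setminus e,t)-\chi(G/e,t)$. Suppose $e$ is an edge to which \Cref{q-deletion-contraction} applies and assume inductively that the conjecture holds for $G\setminus e$ (same vertex set, fewer edges) and for $G/e$ (one fewer vertex). Then $\chi(\mathcal{A}_G^q,t)=\chi(\mathcal{A}_{G\setminus e}^q,t)-(q-1)\chi(\mathcal{A}_{G/e}^q,t)$ is a difference of polynomials in $q$ and $t$, which settles polynomiality at once. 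For the limit, divide by $(q-1)^\ell$: the first term tends to $\chi(G\setminus e,t)$ since $G\setminus e$ has $\ell$ vertices, while the second term equals $\chi(\mathcal{A}_{G/e}^q,q^t)/(q-1)^{\ell-1}$ and hence tends to $\chi(G/e,t)$ because $G/e$ has $\ell-1$ vertices. Thus the extra factor $(q-1)$ in the $q$-deletion--contraction exactly compensates for the drop in dimension, and the limit is $\chi(G\setminus e,t)-\chi(G/e,t)=\chi(G,t)$, closing the inductive step.

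For the base of the induction I would use chordal graphs. By \Cref{freeness for q-graphical arr} and the explicit basis of \Cref{basis for q-graphical arr}, $\mathcal{A}_G^q$ is free when $G$ is chordal, and reading off the exponents from a perfect elimination ordering $v_1,\dots,v_\ell$ gives $\chi(\mathcal{A}_G^q,t)=\prod_{i=1}^{\ell}(t-q^{d_i})$, where $d_i$ is the number of neighbours of $v_i$ among $v_1,\dots,v_{i-1}$ (for instance this recovers $\chi(\mathcal{A}_{P_\ell}^q,t)=(t-1)(t-q)^{\ell-1}$). This is manifestly a polynomial in $q$ and $t$, and since
\[
\prod_{i=1}^{\ell}(q^t-q^{d_i})=\prod_{i=1}^{\ell}q^{d_i}(q^{t-d_i}-1),
\]
dividing by $(q-1)^\ell$ and using $q^{d_i}\to 1$ together with $(q^{t-d_i}-1)/(q-1)\to t-d_i$ yields $\prod_{i}(t-d_i)=\chi(G,t)$. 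So the conjecture holds for every chordal graph with no induction at all.

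The main obstacle is that \Cref{q-deletion-contraction} applies only to an edge $e$ that is itself a maximal clique and whose contraction creates no new clique, and there exist non-chordal graphs possessing no such edge. The smallest is $C_4$, where every edge is a maximal clique but each contraction produces a triangle, which is exactly why the proof of \Cref{characteristic polynomial cycle graph} must treat $C_4$ separately through a supersolvable model in $\mathbb{F}_q^{3}$. Geometrically the difficulty is that inserting an edge which completes a clique of size $\geq 3$ introduces not merely the $(q-1)$ diagonal hyperplanes $\{x_i=\alpha x_j\}$ but the whole family of new clique hyperplanes $\{ax_i+bx_j+cx_k=0\}$ simultaneously, so no single deletion--restriction step with a constant multiplier is available and the inductive skeleton breaks. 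Removing this obstacle requires either a deletion--contraction identity valid across clique creation (expressing the jump as an alternating sum over the newly formed cliques) or an enlarged class of arrangements closed under restriction that remains polynomial-count.

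An alternative route is to attack the coefficients directly. By \Cref{q coefficients}, $\chi(\mathcal{A}_G^q,t)=\sum_{i=0}^{\ell}c_i(q)\,t_q^{\underline{i}}$, where $c_i(q)$ counts the $i$-dimensional subspaces $X\subseteq\mathbb{F}_q^\ell$ on which, for every maximal clique $C$, the restricted coordinate functionals $\{x_j|_X\}_{j\in C}$ are linearly independent. Using
\[
(q^t-1)(q^t-q)\cdots(q^t-q^{i-1})=q^{\binom{i}{2}}\prod_{k=0}^{i-1}(q^{t-k}-1)
\]
and $(q^{t-k}-1)/(q-1)\to t-k$, the conjecture reduces to showing that $c_i(q)$ is a polynomial in $q$, divisible by $(q-1)^{\ell-i}$, with $c_i(q)/(q-1)^{\ell-i}$ equal at $q=1$ to the number of stable partitions of $G$ into $i$ blocks; this is the expected content of \Cref{q stable parition}, and combined with the classical \Cref{stable partition} it gives $\lim_{q\to 1}\chi(\mathcal{A}_G^q,q^t)/(q-1)^\ell=\sum_{i}(\#\,\text{stable }i\text{-partitions})\,t^{\underline{i}}=\chi(G,t)$. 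The hard part is the polynomiality of $c_i(q)$ itself: because $\mathcal{A}_G^q$ genuinely depends on $q$ rather than being a fixed integral arrangement reduced modulo a prime, the finite field method does not apply, and one must prove that this Grassmannian point count, cut out by the non-vanishing of clique minors, is a polynomial in $q$ for every $G$ -- a uniformity that can fail for general configuration varieties and is, I expect, the true difficulty underlying \Cref{conj:polynomiality}.
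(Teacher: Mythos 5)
This statement is \Cref{conj:polynomiality}: it is a \emph{conjecture} in the paper, the authors do not prove it, and they explicitly present \Cref{main theorem}, \Cref{cor}, and \Cref{q stable parition} only as weaker versions of it. So there is no proof in the paper to compare yours against; the only question is whether your proposal settles the conjecture, and it does not (as you yourself concede). The sound parts of your write-up are: (i) the chordal case, where the factorization $\chi(\mathcal{A}_G^q,t)=\prod_{i=1}^{\ell}\bigl(t-q^{d_i}\bigr)$ from \Cref{freeness for q-graphical arr} gives both polynomiality and the limit $\prod_{i}(t-d_i)=\chi(G,t)$; (ii) the inductive step via \Cref{q-deletion-contraction}, where the factor $(q-1)$ indeed compensates for the drop from $\ell$ to $\ell-1$ vertices; and (iii) the reduction, via \Cref{q coefficients} and \Cref{q stable parition}, of the whole conjecture to the polynomiality in $q$ of the subspace counts $c_i(q)$ --- and your implicit observation is correct that once polynomiality (with integer coefficients) is known, the congruence of \Cref{q stable parition} modulo $q-1$ for infinitely many prime powers forces equality with the stable-partition numbers at $q=1$.

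The genuine gap is exactly the one you name, and it is fatal to the inductive skeleton: \Cref{q-deletion-contraction} requires the edge to be a maximal clique whose contraction creates no new clique, and for $C_4$ (and many other non-chordal graphs) no edge qualifies, so the induction cannot reach all graphs from the chordal base case. This is precisely why the paper treats $C_4$ by an ad hoc supersolvable computation inside \Cref{characteristic polynomial cycle graph}, and why it handles triangle-free graphs by a separate direct counting argument rather than by deletion--contraction. Your two proposed repairs --- a deletion--contraction identity valid across clique creation, or direct polynomiality of $c_i(q)$ --- are exactly what is missing, but neither is carried out; and the second is, as you say, the real content of the conjecture, since $\mathcal{A}_G^q$ genuinely varies with $q$ rather than being a fixed integral arrangement reduced modulo primes, so no off-the-shelf finite-field polynomiality principle applies. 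In short: you have a correct proof for chordal graphs, a correct conditional reduction in general, and an accurate diagnosis of why the statement is open --- but not a proof, which is consistent with the paper leaving it as a conjecture.
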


The following results are weaker versions 
of this conjecture.

\begin{theorem}
\label{main theorem}
For any $k \in \mathbb{Z}_{\geq 0}$, and prime power $q$, 
\begin{align*}
\dfrac{\chi(\mathcal{A}_{G}^{q},q^{k})}{(q-1)^{\ell}} \equiv \chi(G,k) \pmod{q-1}. 
\end{align*}
\end{theorem}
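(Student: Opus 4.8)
The plan is to interpret $\chi(\mathcal{A}_{G}^{q},q^{k})$ as a point count, strip off the factor $(q-1)^{\ell}$ by a torus action, and then reduce the remaining count modulo $q-1$ by scaling coordinates one at a time.

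First I would use the point-counting identity established in the proof of the $q$-analogue above (see \cite{bjorner1997subspace-aim}): for any arrangement $\mathcal{A}$ in $\mathbb{F}_{q}^{\ell}$,
\[
\chi(\mathcal{A},q^{k}) = \#\left( (\mathbb{F}_{q}^{k})^{\ell} \setminus \bigcup_{H\in\mathcal{A}} H\otimes_{\mathbb{F}_{q}}\mathbb{F}_{q}^{k} \right).
\]
Writing $W=\mathbb{F}_{q}^{k}$ and applying this to $\mathcal{A}=\mathcal{A}_{G}^{q}$, a tuple $(w_{1},\dots,w_{\ell})\in W^{\ell}$ survives exactly when, for every clique $C$ of $G$, the vectors $\{w_{i}\mid i\in C\}$ are linearly independent over $\mathbb{F}_{q}$; call such a tuple \emph{good}. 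Since every vertex is a clique, each coordinate of a good tuple is nonzero, so the torus $T=(\mathbb{F}_{q}^{\times})^{\ell}$ acting by $(w_{i})\mapsto(t_{i}w_{i})$ preserves goodness and acts freely. A $T$-orbit is the same as a tuple of lines $(\langle w_{1}\rangle,\dots,\langle w_{\ell}\rangle)$ in $\mathbb{P}(W)=\mathbb{P}^{k-1}(\mathbb{F}_{q})$, and goodness depends only on the lines; hence $\chi(\mathcal{A}_{G}^{q},q^{k})=(q-1)^{\ell}N$, where $N$ is the number of clique-independent line-tuples. It remains to show $N\equiv\chi(G,k)\pmod{q-1}$.

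To this end I would stratify $\mathbb{P}^{k-1}(\mathbb{F}_{q})$ by the position of the last nonzero coordinate of the normalized representative, giving a pivot map $\pi\colon\mathbb{P}^{k-1}(\mathbb{F}_{q})\to[k]$; composing with a line-tuple gives a coloring $c=\pi\circ(L_{\bullet})\colon[\ell]\to[k]$, so that $N=\sum_{c}A_{c}$, where $A_{c}$ counts clique-independent line-tuples with prescribed pivots $c$. The core claim is that $A_{c}$ is congruent modulo $q-1$ to $1$ if $c$ is a proper coloring of $G$ and to $0$ otherwise; granting this and summing over $c$ gives $N\equiv\#\{\text{proper }k\text{-colorings}\}=\chi(G,k)$. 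To prove the claim I would scale coordinates of $W$ one at a time: for $j=1,\dots,k$, let $\mathbb{F}_{q}^{\times}$ act on the configurations by multiplying the $j$-th coordinate of every $w_{i}$ by $\lambda$. Each such map is a linear automorphism of $W$, hence preserves clique-independence and all pivot positions; crucially, the stabilizer of any configuration is either trivial or all of $\mathbb{F}_{q}^{\times}$, so every orbit has size $1$ or exactly $q-1$ and the count is unchanged modulo $q-1$ upon restricting to the fixed locus. On that locus the $j$-th coordinate of every vector whose pivot exceeds $j$ is forced to vanish, so after processing $j=1,\dots,k$ in order the only possible surviving configuration is the standard one $w_{i}=e_{c(i)}$. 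This configuration is clique-independent precisely when the colors are distinct on every clique, i.e.\ precisely when $c$ is proper, which yields the claim.

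The main obstacle is this last reduction: one must verify that at every stage of the iteration the orbit sizes are only $1$ or $q-1$, with no intermediate stabilizers, so that the congruence modulo $q-1$ is valid for all prime powers $q$ --- in particular when $q-1$ is composite, a situation in which a naive fixed-point argument for a cyclic group of order $q-1$ would only yield a congruence modulo a prime factor of $q-1$. Making this precise requires tracking, at each step, which coordinates of each $w_{i}$ are still free and checking that the fixed locus is again a configuration space of the same combinatorial type with one further coordinate resolved, so that the induction closes; the payoff is that the argument is completely uniform in $q$ and $k$.
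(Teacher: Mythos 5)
Your proof is correct, and its overall skeleton is the same as the paper's: interpret $\chi(\mathcal{A}_{G}^{q},q^{k})$ as counting clique-independent tuples in $(\mathbb{F}_{q}^{k})^{\ell}$, use the free action of $(\mathbb{F}_{q}^{\times})^{\ell}$ to strip off the factor $(q-1)^{\ell}$ and reduce to counting tuples of lines in $\mathbb{P}(\mathbb{F}_{q}^{k})$, and then detect $\chi(G,k)$ as a fixed locus of coordinate scalings of $\mathbb{F}_{q}^{k}$, all other orbits contributing $0$ modulo $q-1$. The difference is in how the torus is deployed, and it is worth recording. The paper acts by the full quotient torus $T=(\mathbb{F}_{q}^{\times})^{k}/\mathbb{F}_{q}^{\times}(1,\dots,1)$ in one stroke: it computes the orbit size $(q-1)^{\nu(x)-1}$ of a single projective point, identifies the fixed tuples $(\overline{e}_{s_{1}},\dots,\overline{e}_{s_{\ell}})$ with proper $k$-colorings, and then asserts in one line that every non-fixed orbit of tuples has cardinality divisible by $q-1$; strictly speaking this last assertion still requires checking that the stabilizer of a tuple (an intersection of point stabilizers, hence a coordinate-type subtorus of order a power of $q-1$) has index divisible by $q-1$, a small computation the paper leaves implicit. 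Your sequential decomposition into $k$ one-coordinate $\mathbb{F}_{q}^{\times}$-scalings, organized by the pivot stratification, supplies exactly this verification: for a single scaling in the $j$-th coordinate, a line is fixed if and only if its $j$-th coordinate vanishes or it equals $\langle e_{j}\rangle$, so the stabilizer of any configuration is either trivial or all of $\mathbb{F}_{q}^{\times}$, every orbit has size $1$ or exactly $q-1$, and the congruence holds modulo $q-1$ itself rather than merely modulo a prime factor of it --- the pitfall you correctly flag. The iteration then terminates at the standard configuration $w_{i}=e_{c(i)}$, which is clique-independent precisely when $c$ is proper, matching the paper's identification of fixed points with colorings. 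In short, the paper's version buys brevity, while yours makes the orbit-size divisibility for tuples fully rigorous at the cost of extra bookkeeping; both routes prove the theorem.
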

\begin{proof}

Let $V = \mathbb{F}_{q}^{k}$. 
Define 
\begin{align*}
\underline{\chi}_{G}^{q}(V) \coloneqq \Set{(v_{1}, \dots, v_{\ell}) \in V^{\ell} |\begin{array}{l}
\text{ $\{v_{i_{1}}, \dots, v_{i_{p}}\}$ is linearly independent} \\
\text{ over $\mathbb{F}_{q}$ if $\{i_{1}, \dots, i_{p}\}$ is a clique of $G$.} \\
\end{array}  }.
\end{align*}
Note that $\chi(\mathcal{A}_{G}^{q},q^{k}) = \#\underline{\chi}_{G}^{q}(V)$. 
The group $(\mathbb{F}_{q}^{\times})^{\ell}$ acts on $\underline{\chi}_{G}^{q}(V)$ by 
\begin{align*}
(a_{1}, \dots, a_{\ell}) \cdot (v_{1}, \dots, v_{\ell}) \coloneqq (a_{1}v_{1} ,\dots, a_{\ell}v_{\ell}). 
\end{align*}
Therefore $\# \underline{\chi}_{G}^{q}(\mathbb{P}(V)) = \frac{\chi(\mathcal{A}_{G}^{q}, q^{k})}{(q-1)^{\ell}}$, where 
\begin{align*}
\underline{\chi}_{G}^{q}(\mathbb{P}(V)) \coloneqq \Set{(\overline{v}_{1}, \dots, \overline{v}_{\ell}) \in \mathbb{P}(V)^{\ell} | \begin{array}{l}
\text{ $\{v_{i_{1}}, \dots, v_{i_{p}}\}$ is linearly independent} \\
\text{ over $\mathbb{F}_{q}$ if $\{i_{1}, \dots, i_{p}\}$ is a clique of $G$.} 
\end{array} }. 
\end{align*}

Let $T \coloneqq (\mathbb{F}_{q}^{\times})^{k}/\mathbb{F}_{q}^{\times}(1,1, \dots, 1)$. 
Let $x = \begin{pmatrix}
x_{1} \\ \vdots \\ x_{k}
\end{pmatrix} \neq 0$. 
Then $T$ acts on $\mathbb{P}(V)$ by 
\begin{align*}
\overline{(a_{1}, \dots, a_{k})} \cdot \overline{x} = \overline{\begin{pmatrix}
a_{1}x_{1} \\ \vdots \\ a_{k}x_{k}
\end{pmatrix}}. 
\end{align*}
Let $\nu(x) \coloneqq \#\Set{i \in [k] | x_{i} \neq 0}$. 
Then it is easily seen that 
\begin{align*}
\# (T\cdot \overline{x}) = (q-1)^{\nu(x)-1}.
\end{align*}
In particular, $\overline{x} \in \mathbb{P}(V)$ is a fixed point if and only if $\overline{x} = \overline{e}_{i}$, the standard basis, for some $i$. 
Consider the diagonal $T$-action on $\mathbb{P}(V)^{\ell}$. 
This induces a $T$-action on $\underline{\chi}_{G}^{q}(\mathbb{P}(V))$. 
The fixed point set is 
\begin{align*}
\underline{\chi}_{G}^{q}(\mathbb{P}(V))^{T} = \Set{(\overline{e}_{s_{1}}, \dots, \overline{e}_{s_{\ell}}) | \begin{array}{l}
\text{ $\{e_{s_{i_{1}}}, \dots, e_{s_{i_{p}}}\}$ is linearly independent} \\
\text{ over $\mathbb{F}_{q}$ if $\{i_{1}, \dots, i_{p}\}$ is a clique of $G$.}
\end{array}}.
\end{align*}
Note that $\{e_{s_{i_{1}}}, \dots, e_{s_{i_{p}}}\}$ is linearly independent if and only if they are mutually different. 
Therefore we have 
\begin{align*}
\# \underline{\chi}_{G}^{q}(\mathbb{P}(V))^{T} = \chi(G,k). 
\end{align*}
Any other orbits have cardinalities divisible by $q-1$. 
This completes the proof. 
\end{proof}

\begin{corollary}\label{cor}
If $q > \ell^{\ell}$, then $\chi(\mathcal{A}_{G}^{q},t)$ determines $\chi(G,t)$. 
\end{corollary}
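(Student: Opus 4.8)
The plan is to recover enough integer values $\chi(G, k)$ directly from the given polynomial $\chi(\mathcal{A}_G^q, t)$ and then reconstruct $\chi(G, t)$ by interpolation. First I would note that the ambient dimension $\ell$ is visible from the data itself, since $\ell = \deg_t \chi(\mathcal{A}_G^q, t)$; moreover, $\chi(G, t)$ is a \emph{monic} polynomial of degree $\ell$, so it is determined by its values at any $\ell$ distinct points together with the known leading coefficient $1$. Concretely, writing $\chi(G, t) = t^\ell + r(t)$ with $\deg r \le \ell - 1$, it suffices to determine $r$, and hence to know the values $\chi(G, k)$ at the $\ell$ points $k = 0, 1, \dots, \ell - 1$.

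For each such $k$, \Cref{main theorem} gives
\[
\frac{\chi(\mathcal{A}_G^q, q^k)}{(q-1)^\ell} \equiv \chi(G, k) \pmod{q-1},
\]
where the left-hand side is a nonnegative integer (it counts $\#\underline{\chi}_G^q(\mathbb{P}(V))$) that is computed by evaluating the known polynomial $\chi(\mathcal{A}_G^q, t)$ at $t = q^k$. Thus I read off the residue of $\chi(G, k)$ modulo $q-1$. The crux is then an elementary size estimate: since $\chi(G, k)$ counts proper $k$-colorings of $G$, we have $0 \le \chi(G, k) \le k^\ell$, and for $0 \le k \le \ell - 1$ this yields $0 \le \chi(G, k) \le (\ell-1)^\ell < \ell^\ell \le q-1$, where the last inequality uses the hypothesis $q > \ell^\ell$ (so that $q - 1 \ge \ell^\ell$ for integral $q$). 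Hence $\chi(G, k)$ is the unique integer in $\{0, 1, \dots, q-2\}$ congruent to the computed residue, and so $\chi(G, k)$ is pinned down exactly.

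Having recovered $\chi(G, 0), \dots, \chi(G, \ell-1)$, I would finish by Lagrange interpolation: $r(t) = \chi(G, t) - t^\ell$ is the unique polynomial of degree $\le \ell - 1$ passing through the $\ell$ points $\bigl(k,\ \chi(G, k) - k^\ell\bigr)$ for $k = 0, \dots, \ell-1$, and adding back $t^\ell$ returns $\chi(G, t)$.

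The main obstacle — really the only delicate point — is guaranteeing the strict inequality $\chi(G, k) < q - 1$, so that the residue lifts to the integer $\chi(G, k)$ uniquely. This is precisely why one restricts to the points $k \le \ell - 1$, where the crude bound $k^\ell$ stays strictly below $\ell^\ell$, rather than also using $k = \ell$ (for which $\chi(G, \ell)$ may reach $\ell^\ell = q-1$ and collide with the residue $0$); and it is why monicity of $\chi(G, t)$ must be invoked, to compensate for determining the polynomial from only $\ell$ interpolation points.
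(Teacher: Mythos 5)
Your proof is correct and follows essentially the same route as the paper's: evaluate $\chi(\mathcal{A}_G^q,t)$ at powers $q^k$, use \Cref{main theorem} to read off $\chi(G,k)$ modulo $q-1$, lift the residue uniquely via the bound $\chi(G,k)\le k^\ell$, and reconstruct $\chi(G,t)$ from finitely many values using monicity. The only difference is your choice of interpolation points $k=0,\dots,\ell-1$ instead of the paper's $k=1,\dots,\ell$, and this is in fact slightly more careful: at $k=\ell$ the paper's claimed inequality $\chi(G,\ell)\le \ell^\ell<q-1$ can fail with equality (e.g.\ when $q=\ell^\ell+1$ is a prime power and $G$ is edgeless), whereas your bound $\chi(G,k)\le(\ell-1)^\ell<\ell^\ell\le q-1$ is strict for all the points you use.
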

\begin{proof}
Note that $\chi(G,t)$ is determined by the values $\chi(G,1), \chi(G,2), \dots, \chi(G,\ell)$. 
For $k \in [\ell]$, we have $\chi(G,k) \leq k^{\ell} < q-1$. 
Hence $\chi(G,k)$ is the remainder of $\dfrac{\chi(\mathcal{A}_{G}^{q},q^{k})}{(q-1)^{\ell}}$ divided by $q-1$. 
\end{proof}

Recall that a graph $G$ is triangle-free if 
$G$ has no cycle of length $3$. For example, 
bipartite graphs and $C_n$ ($n>3$) are triangle-free. 
\begin{theorem}
Let $G$ be a triangle-free graph. Then, 
\[
\chi(\mathcal{A}_G^q, t)=
(q-1)^\ell\chi(G, \frac{t-1}{q-1}). 
\]
\end{theorem}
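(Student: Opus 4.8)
The plan is to combine the finite-field interpretation of the characteristic polynomial with the fact that, for a triangle-free graph, the only cliques are single vertices and edges, and then to promote the resulting identity at the special values $t=q^k$ to an identity of polynomials in $t$.

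First I would recall the counting formula used in the proof of \Cref{main theorem}: for $V=\mathbb{F}_q^k$ one has $\chi(\mathcal{A}_G^q, q^k)=\#\underline{\chi}_G^q(V)$, where $\underline{\chi}_G^q(V)$ is the set of tuples $(v_1,\dots,v_\ell)\in V^\ell$ such that $\{v_{i_1},\dots,v_{i_p}\}$ is $\mathbb{F}_q$-linearly independent for every clique $\{i_1,\dots,i_p\}$ of $G$. Since $G$ is triangle-free, its cliques are exactly its vertices and its edges, so this condition simplifies to: $v_i\neq 0$ for every $i\in[\ell]$, and $v_i,v_j$ are linearly independent for every edge $\{i,j\}\in E$.

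Next I would decompose the count according to the lines spanned by the vectors. For nonzero $v_i$ write $L_i\coloneqq\langle v_i\rangle$ for the corresponding point of $\mathbb{P}(V)$. Two nonzero vectors $v_i,v_j$ are linearly independent precisely when $L_i\neq L_j$; hence a tuple in $\underline{\chi}_G^q(V)$ amounts to a choice of a line $L_i$ for each vertex with $L_i\neq L_j$ along every edge (that is, a proper coloring of $G$ whose palette is the set of lines of $V$), together with a choice of one of the $q-1$ nonzero vectors on each chosen line. The number of lines of $V=\mathbb{F}_q^k$ is $[k]_q=\frac{q^k-1}{q-1}$, and the number of proper colorings of $G$ with $N$ colors is $\chi(G,N)$. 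Therefore
\[
\chi(\mathcal{A}_G^q, q^k)=(q-1)^\ell\,\chi\!\left(G,\frac{q^k-1}{q-1}\right)
\]
for every $k\geq 1$.

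Finally, I would upgrade this to a polynomial identity. Both $\chi(\mathcal{A}_G^q, t)$ and $(q-1)^\ell\chi\!\left(G,\frac{t-1}{q-1}\right)$ are polynomials in $t$ of degree $\ell$, and substituting $t=q^k$ into $\frac{t-1}{q-1}$ gives exactly $\frac{q^k-1}{q-1}$, so the displayed equality says that these two polynomials agree at the infinitely many points $t=q,q^2,q^3,\dots$; two polynomials agreeing at infinitely many points coincide, which yields the claim. The step where triangle-freeness is essential, and the one I would flag as the crux, is the reduction of the clique conditions to pairwise independence along edges: for a graph containing a triangle, linear independence of $\{v_i,v_j,v_k\}$ is strictly stronger than the three lines $L_i,L_j,L_k$ being distinct (three distinct coplanar lines are dependent), so the clean ``choose a coloring by lines, then choose a vector on each line'' factorization breaks down.
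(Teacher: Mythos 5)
Your proposal is correct and follows essentially the same route as the paper: both reduce the clique conditions (using triangle-freeness) to pairwise distinctness of the spanned lines, identify the projective count with proper colorings by the $\frac{q^k-1}{q-1}$ points of $\mathbb{P}(\mathbb{F}_q^k)$, and then upgrade the identity at $t=q^k$ to a polynomial identity since both sides are degree-$\ell$ polynomials agreeing at infinitely many points. Your explicit ``choose lines, then choose one of $q-1$ vectors per line'' decomposition is just an unpacked version of the paper's passage to $\underline{\chi}_{G}^{q}(\mathbb{P}(V))$ and the factor $(q-1)^\ell$.
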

\begin{proof}
Let $V=\mathbb{F}_q^k$, and consider 
$\#\underline{\chi}_{G}^{q}(\mathbb{P}(V))$. 
Note that nonzero vectors $v_1, v_2\in V$ are 
linearly independent if and only if 
$\overline{v}_1\neq\overline{v}_2$ in $\mathbb{P}(V)$. 
Since a clique of a triangle-free 
graph $G$ is either a vertex or an edge, 
we have 
\begin{align*}
\underline{\chi}_{G}^{q}(\mathbb{P}(V)) \coloneqq \Set{(\overline{v}_{1}, \dots, \overline{v}_{\ell}) \in \mathbb{P}(V)^{\ell} | 
\text{ $\overline{v}_1\neq\overline{v}_2$ for each 
edge $\{i, j\}\in E$}}. 
\end{align*}
This set is equivalent to the set of vertex coloring 
by the set $\mathbb{P}(V)$. 
Hence, the cardinality is expressed as 
\[
\#\underline{\chi}_{G}^{q}(\mathbb{P}(V))=
\chi(G, \#\mathbb{P}(V))=
\chi(G, \frac{q^k-1}{q-1}). 
\]
Thus we have 
\[
\frac{\chi(\mathcal{A}_G^q, q^k)}{(q-1)^\ell}
=
\chi(G, \frac{q^k-1}{q-1}). 
\]
Once we fix a prime power $q$, both 
$\chi(\mathcal{A}_G^q, t)$ and $(q-1)^\ell \chi(G, \frac{t-1}{q-1})$ are polynomials 
in $t$ of degree $\ell$. 
These polynomials take same value for $t=q^k$ ($k\geq 1$). Hence, they are identical. 
%and we have the desired formula. 
%Furthermore, two elements 
%$\overline{v}_1, \overline{v}_2\in\mathbb{P}(V)$ 
%are linearly independent if and only if, 
%$\overline{v}_1\neq \overline{v}_2$. 
%Therefore, we have 
%In the definition of 
%As in the proof of Theorem \ref{main theorem}, 
%$\underline{\chi}_{G}^{q}(V)$. 
\end{proof}

\begin{remark}
There exists a pair of graphs $(G,H)$ depicted in the following picture such that $\chi(\mathcal{A}_{G}^{2},t) = \chi(\mathcal{A}_{H}^{2},t)$ but $\chi(G,t) \neq \chi(H,t)$. 

\begin{center}
\begin{tikzpicture}
\draw (0,0) node[v](6){};
\draw (-0.5,0.5) node[v](1){};
\draw (-1,0) node[v](5){};
\draw (-0.5,-0.5) node[v](3){};
\draw (0.5,0.5) node[v](4){};
\draw (0.5,-0.5) node[v](0){};
\draw (1,0) node[v](2){};
\draw (0)--(2);
\draw (0)--(4);
\draw (0)--(6);
\draw (1)--(3);
\draw (1)--(4);
\draw (1)--(5);
\draw (1)--(6);
\draw (2)--(4);
\draw (2)--(6);
\draw (3)--(5);
\draw (3)--(6);
\draw (4)--(6);
\draw (2) to [distance=50] (5);

\draw (0)--(3);
\draw (0,-1.25) node{$G$};
\end{tikzpicture}
\qquad
\begin{tikzpicture}
\draw (0,0) node[v](6){};
\draw (-0.5,0.5) node[v](1){};
\draw (-1,0) node[v](5){};
\draw (-0.5,-0.5) node[v](3){};
\draw (0.5,0.5) node[v](4){};
\draw (0.5,-0.5) node[v](0){};
\draw (1,0) node[v](2){};
\draw (0)--(2);
\draw (0)--(4);
\draw (0)--(6);
\draw (1)--(3);
\draw (1)--(4);
\draw (1)--(5);
\draw (1)--(6);
\draw (2)--(4);
\draw (2)--(6);
\draw (3)--(5);
\draw (3)--(6);
\draw (4)--(6);
\draw (2) to [distance=50] (5);

\draw (5) to [distance=-40] (0);
\draw (0,-1.25) node{$H$};
\end{tikzpicture}
\end{center}
The characteristic polynomials are as follows. 
\begin{align*}
\chi(\mathcal{A}_{G}^{2},t) = \chi(\mathcal{A}_{H}^{2},t) &= t^{7} - 30 t^{6} + 376 t^{5} - 2545 t^{4} + 9934 t^{3} - 21880 t^{2} + 24384 t - 10240, \\
\chi(G,t) &= t^{7} - 14t^{6} + 83t^{5} - 265t^{4} + 474t^{3} - 441t^{2} + 162t, \\
\chi(H,t) &= t^{7} - 14t^{6} + 83t^{5} - 264t^{4} + 468t^{3} - 430t^{2} + 156t. 
\end{align*}
\end{remark}

The characteristic polynomial $\chi(\mathcal{A}_{G}^{q},t)$ can also determine the numbers of stable partitions of $G$ by the following theorem. 

\begin{theorem}\label{q stable parition}
Let $\chi(\mathcal{A}_{G}^{q},t) = \sum_{i=0}^{\ell}c_{i}t_{q}^{\underline{i}}$. 
Then $c_{i}/(q-1)^{\ell-i}$ is a non-negative integer and congruent to the number of stable partitions of $G$ into $i$ blocks modulo $q-1$. 
\end{theorem}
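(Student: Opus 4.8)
The plan is to combine the subspace interpretation of the coefficients $c_i$ furnished by \Cref{q coefficients} with an analysis of the coordinate torus action, in the same spirit as the argument for \Cref{main theorem}. By \Cref{q coefficients}, $c_i$ counts the $i$-dimensional subspaces $X\subseteq\mathbb{F}_q^\ell$ not contained in any hyperplane of $\mathcal{A}_G^q$. Unwinding the definition of $\mathcal{A}_G^q$, a subspace $X$ is contained in a hyperplane $\{\sum_{j\in C}a_jx_j=0\}$ supported on a clique $C$ precisely when the nonzero vector $(a_j)_{j\in C}$ annihilates the image of the coordinate projection $\mathrm{pr}_C\colon X\to\mathbb{F}_q^C$; such a vector exists if and only if $\mathrm{pr}_C|_X$ is \emph{not} surjective. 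Hence $c_i=\#\mathcal{G}_i$, where $\mathcal{G}_i$ is the set of \emph{good} subspaces, those $i$-dimensional $X$ with $\mathrm{pr}_C|_X$ surjective for every clique $C$ (equivalently every maximal clique). The diagonal torus $T=(\mathbb{F}_q^\times)^\ell$ scales coordinates, preserves goodness, and so acts on $\mathcal{G}_i$; everything will follow from the orbit sizes of this action.

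The key step is a lemma valid for \emph{any} subspace $X\subseteq\mathbb{F}_q^\ell$: writing $S=\mathrm{Stab}_T(X)$ and letting $\pi_X$ be the partition of $[\ell]$ defined by $j\sim j'\iff a_j=a_{j'}$ for all $a\in S$, one has $S=T_{\pi}:=\{a\in T\mid a\text{ is constant on each block of }\pi_X\}$, so that $|S|=(q-1)^{\#\pi_X}$. To prove this, group the coordinate lines of $\mathbb{F}_q^\ell$ into the $S$-isotypic components $W_t=\bigoplus_{j\in I_t}\mathbb{F}_q e_j$ indexed by the blocks $I_t$ of $\pi_X$; since the characters $a\mapsto a_j$ of $S$ take distinct values on distinct blocks, Dedekind independence of characters shows the isotypic projections lie in the $\mathbb{F}_q$-algebra generated by $S$, whence the $S$-invariant subspace decomposes as $X=\bigoplus_t(X\cap W_t)$. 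The inclusion $S\subseteq T_{\pi_X}$ is immediate, and conversely any $a\in T_{\pi_X}$ acts as a scalar on each $W_t$, hence preserves each $X\cap W_t$ and therefore $X$; this yields $S=T_{\pi_X}$. I expect this lemma to be the main obstacle: it is exactly what forces every stabilizer to have order a power of $q-1$, and without it the reduction modulo $q-1$ below would pick up spurious contributions.

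Granting the lemma, every orbit $O\subseteq\mathcal{G}_i$ has size $(q-1)^{\ell}/(q-1)^{m(O)}=(q-1)^{\ell-m(O)}$, where $m(O)=\#\pi_X$ for any $X\in O$. Applying goodness to the singleton cliques $\{j\}$ shows that for each block $I_t$ some element of $X$ has a nonzero coordinate in $I_t$, so $X\cap W_t\neq 0$ and thus $i=\sum_t\dim(X\cap W_t)\ge m(O)$. Summing over orbits,
\[
\frac{c_i}{(q-1)^{\ell-i}}=\sum_{O\subseteq\mathcal{G}_i}(q-1)^{\,i-m(O)},
\]
with every summand a positive integer, which gives non-negativity and integrality. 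Reducing modulo $q-1$, only the orbits with $m(O)=i$ survive, each contributing exactly $1$.

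Finally I would identify the orbits with $m(O)=i$. These are precisely the orbits where $\dim(X\cap W_t)=1$ for every block, so $X=\bigoplus_t\mathbb{F}_q w_t$ with $w_t\in\mathbb{F}_q^{I_t}$; goodness on singleton cliques forces $w_t$ to have full support on $I_t$, and goodness on an arbitrary clique $C$, together with the disjointness of the supports $C\cap I_t$, forces each block to meet $C$ in at most one vertex, so $\pi_X$ is a stable partition. Rescaling each $w_t$ to $\sum_{j\in I_t}e_j$ by a suitable $a\in T$ shows $X$ lies in the $T$-orbit of $X_P=\{v\mid v_a=v_b\text{ whenever }a,b\text{ are in the same block of }P\}$ for $P=\pi_X$; conversely each stable partition $P$ into $i$ blocks produces exactly one such orbit (that of $X_P$), and distinct partitions give distinct orbits since the orbit determines $\pi_X$. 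Therefore the number of orbits with $m(O)=i$ equals the number of stable partitions of $G$ into $i$ blocks (in the sense of \Cref{stable partition}), and
\[
\frac{c_i}{(q-1)^{\ell-i}}\equiv \#\{\text{stable partitions of }G\text{ into }i\text{ blocks}\}\pmod{q-1},
\]
which is the assertion.
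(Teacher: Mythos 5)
Your proof is correct, and it takes a genuinely different route from the paper's, even though both start from \Cref{q coefficients} and both finish by counting orbits of a group whose order is a power of $q-1$. The paper encodes each good subspace by its unique reduced row echelon form, notes that projectivizing the columns of an RREF matrix is $(q-1)^{\ell-i}$-to-one (only the $\ell-i$ non-pivot columns can be rescaled without leaving RREF), and then reuses the action of $(\mathbb{F}_q^\times)^{i}/\mathbb{F}_q^\times$ on $\mathbb{P}(\mathbb{F}_q^{i})^\ell$ from the proof of \Cref{main theorem}, whose fixed points among RREF tuples are the tuples of standard basis vectors, i.e., stable partitions. You instead let the coordinate torus $(\mathbb{F}_q^\times)^\ell$ act on the good subspaces themselves and determine every stabilizer exactly via your block-torus lemma, which is sound: $|S|$ divides $(q-1)^\ell$, hence is prime to the characteristic, so the isotypic idempotents lie in $\mathbb{F}_q[S]$ and the decomposition $X=\bigoplus_t(X\cap W_t)$ holds. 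Your route buys an exact identity $c_i/(q-1)^{\ell-i}=\sum_{O}(q-1)^{i-m(O)}$ rather than only a congruence, needs no normal-form combinatorics, and quietly repairs a real blemish in the paper's own argument: the sentence asserting that $c_i/(q-1)^{\ell-i}$ equals the number of \emph{all} tuples in $\mathbb{P}(V)^\ell$ satisfying the clique conditions is false as literally stated (for the edgeless graph on two vertices and $i=\ell=2$ that count is $(q+1)^2$ while $c_2=1$), and must be read as counting only tuples arising from RREF matrices. What the paper's route buys in exchange is brevity: RREF uniqueness delivers the divisibility by $(q-1)^{\ell-i}$ with no stabilizer analysis at all.
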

\begin{proof}
By \Cref{q coefficients}, the coefficient $c_{i}$ is equal to the number of subspaces that are not contained in any hyperplanes in $\mathcal{A}_{G}^{q}$. 
Since every $i$-dimensional subspace in $\mathbb{F}_{q}^{\ell}$ corresponds to a matrix consisting of $\ell$ columns with rank $i$ in reduced row echelon form, $c_{i}$ coincides with the number of reduced row echelon form $(v_{1}, \dots, v_{\ell}) \in V^{\ell}$ such that $\{v_{i_{1}}, \dots, v_{i_{p}}\}$ is linearly independent over $\mathbb{F}_{q}$ if $\{i_{1}, \dots, i_{p}\}$ is a clique of $G$, where $V = \mathbb{F}_{q}^{i}$. 
Therefore $c_{i}/(q-1)^{\ell-i}$ equals the number of $(\overline{v}_{1}, \dots, \overline{v}_{\ell}) \in \mathbb{P}(V)^{\ell}$ such that $\{v_{i_{1}}, \dots, v_{i_{p}}\}$ is linearly independent over $\mathbb{F}_{q}$ if $\{i_{1}, \dots, i_{p}\}$ is a clique of $G$. 

Using the same group action in the proof of \Cref{main theorem}, we can show that $c_{i}$ is congruent to the number of reduced row echelon form $(e_{s_{1}}, \dots, e_{s_{\ell}})$ such that $\{e_{s_{i_{1}}}, \dots, e_{s_{i_{p}}}\}$ is linearly independent over $\mathbb{F}_{q}$ if $\{i_{1}, \dots, i_{p}\}$ is a clique of $G$, modulo $q-1$. 
From such a matrix $(e_{s_{i_{1}}}, \dots, e_{s_{i_{\ell}}})$, we can construct a stable partition of $G$ consisting of $i$ blocks $\Set{j \in [\ell] | s_{j}=k} \ (1 \leq k \leq i)$.  
\end{proof}

%\begin{remark}\label{conj qC}
Some of properties of the chromatic polynomials for finite graphs have $q$-analogues. For example, let us 
denote by $G+K_m$ the join of a graph $G$ and the complete graph $K_m$. Then 
it is easily seen that $\chi(G+K_m, t)=t(t-1)\cdots(t-m+1)\chi(G, t-m)$. 
As a $q$-analogue of this formula, we can prove the following. 
\begin{proposition}\label{conj qC}
Let $G$ be a graph on $[\ell]$. 
Then 
\begin{align*}
\chi(\mathcal{A}_{G+K_m}^{q}, t)=(t-1)(t-q)\cdots (t-q^{m-1})q^{m\ell}\chi(\mathcal{A}_G^{q}, q^{-m}t).
\end{align*}
\end{proposition}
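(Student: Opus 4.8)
The plan is to evaluate both sides at $t=q^k$ using the point-counting interpretation of $\chi(\mathcal{A}_G^q, q^k)$ introduced in the proof of \Cref{main theorem}, and then conclude by a polynomial-identity argument as in the preceding proofs. Fix $V=\mathbb{F}_q^k$ and recall $\chi(\mathcal{A}_G^q, q^k)=\#\underline{\chi}_G^q(V)$. Label the vertices of $G$ by $[\ell]$ and the $m$ vertices of $K_m$ by $\ell+1, \dots, \ell+m$. First I would describe the cliques of the join $G+K_m$: since every vertex of $K_m$ is adjacent to all other vertices, a clique of $G+K_m$ is exactly $C\cup S$ where $C$ is a clique of $G$ and $S\subseteq\{\ell+1, \dots, \ell+m\}$. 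Consequently a tuple $(v_1, \dots, v_\ell, w_1, \dots, w_m)\in V^{\ell+m}$ lies in $\underline{\chi}_{G+K_m}^q(V)$ if and only if, for every clique $C$ of $G$, the set $\Set{v_i | i\in C}\cup\{w_1, \dots, w_m\}$ is linearly independent; because linear independence passes to subsets, imposing this with all of $w_1, \dots, w_m$ already implies the conditions for arbitrary $S$, and the case $C=\emptyset$ forces $w_1, \dots, w_m$ to be linearly independent.

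Next I would fix linearly independent $w_1, \dots, w_m$ and set $W=\mathrm{span}(w_1, \dots, w_m)$, an $m$-dimensional subspace, with quotient map $\pi\colon V\to V/W$. The key linear-algebra fact is that, for a clique $C$ of $G$, the set $\Set{v_i | i\in C}\cup\{w_1, \dots, w_m\}$ is linearly independent in $V$ if and only if the images $\Set{\pi(v_i) | i\in C}$ are linearly independent in $V/W$. Hence, for fixed $W$, the conditions on $(v_1, \dots, v_\ell)$ are precisely that $(\pi(v_1), \dots, \pi(v_\ell))\in\underline{\chi}_G^q(V/W)$. Since $\dim(V/W)=k-m$ and each $v_i$ admits exactly $|W|=q^m$ lifts over its image, the number of admissible $(v_1, \dots, v_\ell)$ for a given ordered basis $w_1, \dots, w_m$ equals $q^{m\ell}\,\#\underline{\chi}_G^q(V/W)=q^{m\ell}\chi(\mathcal{A}_G^q, q^{k-m})$, independently of the chosen $W$.

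Finally I would assemble the count. The number of ordered linearly independent $m$-tuples $(w_1, \dots, w_m)$ in $V$ is $(q^k-1)(q^k-q)\cdots(q^k-q^{m-1})$, and multiplying by the per-$W$ count gives, for $t=q^k$ with $k\geq m$,
\[
\chi(\mathcal{A}_{G+K_m}^q, q^k)=(q^k-1)(q^k-q)\cdots(q^k-q^{m-1})\, q^{m\ell}\, \chi(\mathcal{A}_G^q, q^{k-m}).
\]
Writing $t=q^k$ turns the product into $(t-1)(t-q)\cdots(t-q^{m-1})$ and turns $q^{k-m}$ into $q^{-m}t$, so both sides of the claimed identity agree at $t=q^k$ for all $k\geq m$. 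As both sides are polynomials in $t$ of degree $\ell+m$ agreeing at infinitely many points, they are identical. The only delicate step is the clique analysis combined with the quotient reduction: one must verify that the join structure collapses the entire family of independence constraints to a single constraint in $V/W$ and that this holds uniformly in $W$, so that the counting factorizes cleanly. The remaining pieces (counting independent tuples, counting lifts, and the polynomiality conclusion) are routine.
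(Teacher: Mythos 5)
Your proof is correct, but it takes a genuinely different route from the paper's. The paper reduces to $m=1$ by writing $G+K_m=(G+K_{m-1})+K_1$, introduces the affine arrangement $\widetilde{\mathcal{A}}_{G}^{q}$ of all translates of the hyperplanes of $\mathcal{A}_{G}^{q}$, proves $\chi(\widetilde{\mathcal{A}}_{G}^{q},t)=q^{\ell}\chi(\mathcal{A}_{G}^{q},q^{-1}t)$ (\Cref{lemma of qC}) via Whitney's theorem together with a $q^{d}$-to-$1$ correspondence between central subsets of $\widetilde{\mathcal{A}}_{G}^{q}$ and subsets of $\mathcal{A}_{G}^{q}$ whose intersection has codimension $d$, and finally identifies $\widetilde{\mathcal{A}}_{G}^{q}$ with the deconing of $\mathcal{A}_{G+K_1}^{q}$, which supplies the factor $t-1$. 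You instead argue by finite-field point counting, the technique the paper itself uses for \Cref{main theorem} and for the theorem on triangle-free graphs: at $t=q^{k}$ you fiber $\underline{\chi}_{G+K_m}^{q}(V)$ over the ordered linearly independent $m$-tuple $(w_{1},\dots,w_{m})$ occupying the $K_m$ coordinates, reduce the remaining clique constraints modulo $W=\operatorname{span}(w_{1},\dots,w_{m})$ to the condition $(\pi(v_{1}),\dots,\pi(v_{\ell}))\in\underline{\chi}_{G}^{q}(V/W)$, count the $q^{m\ell}$ lifts, and conclude by comparing two degree-$(\ell+m)$ polynomials that agree at all $t=q^{k}$ with $k\geq m$. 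Your clique analysis of the join is the right reduction (in particular, the full $K_m$ vertex set is itself a clique, forcing independence of the $w_{j}$), and your quotient equivalence is exactly the needed linear-algebra fact. What your approach buys: a uniform treatment of all $m$ with no induction and no auxiliary arrangement, using only ingredients already established in the paper. What the paper's approach buys: the identity of \Cref{lemma of qC} for the affine deformation $\widetilde{\mathcal{A}}_{G}^{q}$, which is of independent interest, is proved at the level of intersection posets (so it does not rely on specializing $t$ to powers of $q$), and comes with the structural observation that $\widetilde{\mathcal{A}}_{G}^{q}$ is the deconing of $\mathcal{A}_{G+K_1}^{q}$.
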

%\end{remark}

For the proof, we first note that $G+K_m=(G+K_{m-1})+K_1$. It suffices to show the case $m=1$, to be specific,
\begin{align*}
    \chi(\mathcal{A}_{G+K_1}^q,t)=(t-1)q^{\ell}\chi(\mathcal{A}_G^q,q^{-1}t).
\end{align*}
To show this, we shall introduce a new kind of deformation of graphical arrangement, and then complete the proof following a lemma.
\begin{definition}
Let $G$ be a graph on $[\ell]$. 
Define $\widetilde{\mathcal{A}}_{G}^{q}$ of $G$ over the finite field $\mathbb{F}_{q}$ by
\begin{align*}
\widetilde{\mathcal{A}}_{G}^{q} \coloneqq \bigcup_{\{i_{1}, \dots, i_{r}\}} \Set{\{a_{i_{1}}x_{i_{1}} + \dots + a_{i_{r}}x_{i_{r}} = b\} | (a_{i_{1}}, \dots, a_{i_{r}}) \in \mathbb{F}_{q}^{r}\setminus\{0\}, \, b \in \mathbb{F}_{q}}
\end{align*}
where $\{i_{1}, \dots, i_{r}\}$ runs over all cliques of $G$. 
%$$\widetilde{\mathcal{A}}_{G}^{q}=\{a_1x_{i1}+a_2x_{i2}+\cdots+a_jx_{ij}=b\mid a_l, \in \mathbb{F}_q^*\text{ for all $l$, } b\in \mathbb{F}_q, x_{i1},x_{i2},\dots,x_{ij}\text{ is a clique of $G$}\}$$ 
%over finite field $\mathbb{F}_q$.
\end{definition}
The following lemma illustrates the characteristic polynomial of this kind of deformation of  graphical arrangement.

\begin{lemma}\label{lemma of qC}
%For any simple graph $G=(V,E)$ with $m$ vertices, the corresponding q-Catalan graphical arrangement has characteristic polynomial $\chi(\widetilde{\mathcal{A}}_{G}^{q},t)=q^m\chi(\mathcal{A}_{G}^{q},q^{-1}t)$.
$\chi(\widetilde{\mathcal{A}}_{G}^{q},t)=q^\ell\chi(\mathcal{A}_{G}^{q},q^{-1}t)$.
\end{lemma}

\begin{proof}
By Whitney's theorem \cite[Lemma 2.55]{orlik1992arrangements}, the characteristic polynomial  of $\widetilde{\mathcal{A}}_{G}^{q}$ is represented as 
\begin{align*}
\chi(\widetilde{\mathcal{A}}_{G}^{q},t) = \sum_{\mathcal{B}}(-1)^{|\mathcal{B}|}t^{\dim(\bigcap_{H \in \mathcal{B}}H)}, 
\end{align*}
where $\mathcal{B}$ runs over the subsets of $\widetilde{\mathcal{A}}_{G}^{q}$ which is central, that is, $\bigcap_{H \in \mathcal{B}}H \neq \varnothing$. 
Let $\mathcal{B}$ be a central $k$-subset of $\widetilde{\mathcal{A}}_{G}^{q}$ and suppose that the codimension of the intersection $\bigcap_{H \in \mathcal{B}}H$ is $d$. 
Since every hyperplane in $\widetilde{\mathcal{A}}_{G}^{q}$ is a translate of a hyperplane in $\mathcal{A}_G^{q}$ and $\mathcal{B}$ does not contain parallel hyperplanes, we can associate $\mathcal{B}$ with a $k$-subset of $\mathcal{A}_G^{q}$ by translating hyperplanes in $\mathcal{B}$. 
We will show that this correspondence is $q^{d}$-to-$1$. 

%For a hyperplane arrangement $\mathcal{A}=\{H_1, H_2, \dots, H_n\}$, the definition of characteristic polynomial is $\chi(\mathcal{A},t)=\sum_{I\subseteq[n]}(-1)^{|I|}t^{\dim H_I}$, where $H_I=\bigcap_{i\in I} H_i$ and $H_\varnothing$ is the whole space. 

Let $\{H_1', H_2',\dots,H_k'\}$ be a $k$-subset of $\mathcal{A}_{G}^{q}=\{H_1, H_2, \dots, H_n\}$, with 
\begin{align*}
H_i'=\{(x_1,x_2,\dots,x_\ell)\mid a_{i1}x_1+a_{i2}x_2+\cdots+a_{i\ell}x_\ell=0\},
\end{align*}
%$$H_i'=\{(x_1,x_2,\cdots,x_\ell)\mid a_{i1}x_1+a_{i2}x_2+\cdots+a_{i\ell}x_\ell=0\},$$
where $a_{ij}\in\mathbb{F}_q$.
Suppose that the codimension of the intersection $\bigcap_{i=1}^k H_{i}'$ is $d$.
Without loss of generality, we suppose that the first $d$ hyperplanes $H_{1}^{\prime}, \dots, H_{d}^{\prime}$ are independent.
%Since the $q$-deformation is central we suppose that the codimension  of $\bigcap_{i=1}^k H_{i}'$ is $d\leq \ell$. Without loss of generality, we suppose that the codimension of $\bigcap_{i=1}^d H_{i}'$ is also $d$. 
It means that for the $k\times \ell$ matrix $A=(a_{ij})$, all rows can be written as linear combinations of the first $d$ rows and the matrix has rank $d$. Say $r_1,r_2,\dots,r_k$ be rows in $A$, then 
\begin{align*}
r_i=c_{i1}r_1+c_{i2}r_2+\cdots+c_{id}r_d.
\end{align*}
Note that in this case, $H_i'$ corresponds to hyperplanes 
\begin{align*}
\{(x_1,x_2,\dots,x_\ell)\mid a_{i1}x_1+a_{i2}x_2+\cdots+a_{i\ell}x_\ell=b_i\}
\end{align*}
in $\widetilde{\mathcal{A}}_{G}^{q}$ where $b_i\in \mathbb{F}_q$ and $b_i$ are linear combinations of $b_1,b_2,\dots,b_d$ with fixed coefficients, to be specific, 
\begin{align*}
b_i=c_{i1}b_1+c_{i2}b_2+\cdots+c_{id}b_d.
\end{align*}
So we only need to determine the constant terms for the first $d$ hyperplanes in $\widetilde{\mathcal{A}}_{G}^{q}$. Hence we have $q^d$ choices. 

%Note that $d=\ell-\operatorname{dim}H_I$, then 
Therefore any subset $I$ of $[n]$ contributes $(-1)^{|I|}q^{\ell-\operatorname{dim}H_I}t^{\operatorname{dim}H_I}$ to the characteristic polynomial of $\widetilde{\mathcal{A}}_{G}^{q}$, 
where $H_I=\bigcap_{i\in I} H_i$ and $H_\varnothing$ is the whole space.
%For other type of subset of $\widetilde{\mathcal{A}}_{G}^{q}$ which does not come from the subset of hyperplanes in $\mathcal{A}_G^{q}$, there must be parallel hyperplanes hence the intersection is empty and it contributes $0$ to the characteristic polynomial of $\widetilde{\mathcal{A}}_{G}^{q}$. 
Then we can compute the characteristic polynomial of $\widetilde{\mathcal{A}}_{G}^{q}$ as follows.
    \begin{align*}
        \chi(\widetilde{\mathcal{A}}_{G}^{q},t) &= \sum_{I\subseteq[n]}(-1)^{|I|}q^{\ell-\operatorname{dim}H_I}t^{\dim H_I}\\
        &= \sum_{I\subseteq[n]}(-1)^{|I|}q^{\ell}(q^{-1}t)^{\dim H_I} \\
        &= q^{\ell}\sum_{I\subseteq[n]}(-1)^{|I|}(q^{-1}t)^{\dim H_I}\\
        &= q^\ell\chi(\mathcal{A}_{G}^{q},q^{-1}t).
    \end{align*}
    Hence we get the formula as required.
\end{proof}

%\begin{proof}
%   For a hyperplane arrangement $\mathcal{A}=\{H_1, H_2, \cdots, H_n\}$, the definition of characteristic polynomial $\chi(\mathcal{A},t)=\sum_{I\subseteq[n]}(-1)^{|I|}t^{\dim H_I}$, where $H_I=\bigcap_{i\in I} H_i$ and $H_\varnothing$ is the whole space. If we take an $k$-subset of $\mathcal{A}_{G}^{q}$ such that the intersection of these hyperplanes has codimension $l$, then this subset contributes a $(-1)^kt^{m-l}$ to the characteristic polynomial. For the corresponding hyperplanes in $\widetilde{\mathcal{A}}_{G}^{q}$, we note that fixing constant terms of $l$ hyperplanes in the $k$-subset, the constant terms of other hyperplanes are fixed. Hence such an intersection contributes $(-1)^kq^lt^{m-l}$ to the characteristic polynomial of $\widetilde{\mathcal{A}}_{G}^{q}$. Finally we get that $\chi(\widetilde{\mathcal{A}}_{G}^{q},t)=q^m\chi(\mathcal{A}_{G}^{q},q^{-1}t)$.
%\end{proof}

We observe that for any clique in $G$, it is also a clique in $G+K_1$ by adding the vertex in $K_1$, so $\widetilde{\mathcal{A}}_{G}^{q}$ is the deconing of $\mathcal{A}_{G+K_1}^q$ with respect to the additional hyperplane $H_0=\{\operatorname{ker}(x_0)\}$ where $x_0$ is the indeterminate corresponding to the added vertex in $K_1$. It is well known that for coning and deconing, the characteristic polynomials differ by a factor $t-1$, which means 
\begin{align*}
    \chi(\mathcal{A}_{G+K_1}^q,t) &= (t-1)\chi(\widetilde{\mathcal{A}}_{G}^{q},t)\\
    &= (t-1)q^\ell\chi(\mathcal{A}_G^q,q^{-1}t)
\end{align*}
as desired. Hence we complete the proof of Proposition \ref{conj qC}.\qed

%Combining \Cref{lemma of qC} and the observation that for any clique in $G$, it is also a clique in $G+K_1$ by adding the vertex in $K_1$, we know that the deconing of $\mathcal{A}_{G+K_1}^q$ is $\widetilde{\mathcal{A}}_{G}^{q}$. It is well known that for coning and deconing, the characteristic polynomials differ by a factor $t-1$, which means $\chi(\mathcal{A}_{G+K_1}^q,t)=(t-1)\chi(\widetilde{\mathcal{A}}_{G}^{q},t)$. By previous arguments, we have the following proposition.
    
%\begin{proposition}\label{join one point}
%   For any graph $G$ with $l$ vertices, we have
%    $$\chi(\mathcal{A}_{G+K_1}^q,t)=(t-1)q^l\chi(\mathcal{A}_G^q,q^{-1}t).$$
%\end{proposition}

%By \Cref{join one point}, we can extend the case into the join of a graph with any complete graph $K_m$, since the join with a complete graph $K_m$ is equivalent to repeat the join with a vertex $m$ times. Then the proof of \Cref{conj qC} completes. 
%{\color{blue} (What is 2.6? Proposition?)}
%\qed

\section{Freeness}\label{section:freeness}

An arrangement $\mathcal{A}$ is \textbf{supersolvable} if and only if there exists a filtration 
\begin{align*}
\varnothing=\mathcal{A}_{0} \subseteq \mathcal{A}_{1} \subseteq \mathcal{A}_{2} \subseteq \dots \subseteq \mathcal{A}_{\ell} = \mathcal{A}
\end{align*}
such that, for each $i \in [\ell]$, $\rank \mathcal{A}_{i} = i$  and for any distinct hyperplanes $H,H^{\prime} \in \mathcal{A}_{i}\setminus\mathcal{A}_{i-1}$ there exists $H^{\prime\prime}$ such that $H \cap H^{\prime} \subseteq H^{\prime\prime}$ \cite[Theorem 4.3]{bjorner1990hyperplane-dcg}. 
Every supersolvable arrangement is inductively free by \cite[Theorem 4.2]{jambu1984free-aim}. 
When $\mathcal{A}$ is supersolvable with the filtration above, the characteristic polynomial decomposes as 
\begin{align*}
\chi(\mathcal{A},t) = \prod_{i=1}^{\ell}(t-|\mathcal{A}_{i}\setminus\mathcal{A}_{i-1}|). 
\end{align*}

A vertex $v$ of a simple graph $G$ is called \textbf{simplicial} if the neighborhood $N_{G}(v) \coloneqq \Set{u \in V_{G} | \{u,v\} \in E_{G}}$ is a clique. 
An ordering $(v_{1}, \dots, v_{\ell})$ of the vertices of $G$ is a \textbf{perfect elimination ordering} if $v_{i}$ is simplicial in the subgraph of $G$ induced by $\{v_{1}, \dots, v_{i}\}$ for each $i \in [\ell]$. 

\begin{theorem}[Stanley (See {\cite[Theorem 3.3]{edelman1994free-mz}}), Dirac \cite{dirac1961rigid-aadmsduh}, Fulkerson-Gross \cite{fulkerson1965incidence-pjom}]\label{graphical arrangement freeness}
The following are equivalent. 
\begin{enumerate}[label=(\arabic*)]
\item\label{PEO} $G$ has a perfect elimination ordering. 
\item $\mathcal{A}_{G}$ is supersolvable. 
\item $\mathcal{A}_{G}$ is free. 
\item\label{chordal} $G$ is chordal. 
\end{enumerate}
Moreover, when $(v_{1}, \dots, v_{\ell})$ is a perfect elimination ordering of $G$
\begin{align*}
\chi(\mathcal{A}_{G}, t) = \prod_{i=1}^{\ell}(t-|N_{G_{i}}(v_{i})|), 
\end{align*}
where $G_{i} \coloneqq G[\{v_{1}, \dots, v_{i}\}]$ and $N_{G_{i}}(v_{i})$ is the set of adjacent vertices of $v_{i}$ in $G_{i}$. 
\end{theorem}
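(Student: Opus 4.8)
The plan is to establish the four-way equivalence through the cycle of implications $(1)\Rightarrow(2)\Rightarrow(3)\Rightarrow(4)\Rightarrow(1)$, and to read the characteristic polynomial formula off the supersolvable filtration produced along the way. I would take the purely graph-theoretic equivalence $(1)\Leftrightarrow(4)$---a graph is chordal if and only if it admits a perfect elimination ordering---as the input of Dirac and Fulkerson--Gross and simply cite it; this lets me use $(4)\Rightarrow(1)$ to close the loop. The genuinely arrangement-theoretic work is then concentrated in $(1)\Rightarrow(2)$ and in $(3)\Rightarrow(4)$.

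For $(1)\Rightarrow(2)$, given a perfect elimination ordering $(v_1, \dots, v_\ell)$ I would set $\mathcal{A}_i \coloneqq \mathcal{A}_{G_i}$ with $G_i = G[\{v_1, \dots, v_i\}]$, which yields a chain $\varnothing = \mathcal{A}_0 \subseteq \cdots \subseteq \mathcal{A}_\ell = \mathcal{A}_G$. The hyperplanes of $\mathcal{A}_i \setminus \mathcal{A}_{i-1}$ are precisely the $\{x_{v_i} = x_u\}$ with $u \in N_{G_i}(v_i)$. To check the modularity condition in the definition of supersolvability, I take two distinct such hyperplanes $\{x_{v_i}=x_u\}$ and $\{x_{v_i}=x_{u'}\}$; their intersection is contained in $\{x_u = x_{u'}\}$, and since $v_i$ is simplicial in $G_i$ the neighborhood $N_{G_i}(v_i)$ is a clique, so $\{u,u'\}\in E_G$ and $\{x_u=x_{u'}\}\in\mathcal{A}_{i-1}$ serves as the required $H''$. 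This use of simpliciality is the heart of the construction; the accompanying rank count along the filtration is routine. The implication $(2)\Rightarrow(3)$ is then immediate from the cited result of Jambu--Terao that supersolvable arrangements are inductively free.

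I expect $(3)\Rightarrow(4)$ to be the main obstacle, and I would prove the contrapositive. If $G$ is not chordal it contains an induced cycle $C_n$ with $n\geq 4$, and $\mathcal{A}_{C_n}$ arises as a localization of $\mathcal{A}_G$: taking the flat obtained by merging all vertices of the cycle into one block (valid since $C_n$ is connected), the hyperplanes containing it are exactly the edges of $C_n$. Freeness passes to localizations of a free arrangement by Orlik--Terao, so $\mathcal{A}_{C_n}$ would be free; but Terao's factorization theorem forces the characteristic polynomial of a free arrangement to split into linear factors over $\mathbb{Z}_{\geq 0}$. This contradicts the formula $\chi(\mathcal{A}_{C_n}, t) = (t-1)^n + (-1)^n(t-1)$ recalled before the lemma, which for $n\geq 4$ does not split in this way---already for $n=4$ it contains the factor $t^2 - 3t + 3$ of negative discriminant. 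The care needed is in justifying that the induced cycle really appears as a localization and that freeness localizes; both are standard but deserve explicit statement, together with the elementary non-splitting check for the cycle polynomial.

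Finally, the characteristic polynomial formula follows with no extra effort. Since $\mathcal{A}_G$ is supersolvable via the filtration above, the product decomposition of the characteristic polynomial of a supersolvable arrangement recalled in the text gives $\chi(\mathcal{A}_G, t) = \prod_{i=1}^\ell (t - |\mathcal{A}_i \setminus \mathcal{A}_{i-1}|)$, and the identification $|\mathcal{A}_i \setminus \mathcal{A}_{i-1}| = |N_{G_i}(v_i)|$ from the description of the newly added hyperplanes yields $\chi(\mathcal{A}_G, t) = \prod_{i=1}^\ell (t - |N_{G_i}(v_i)|)$.
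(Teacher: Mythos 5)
Your proposal is correct and takes essentially the approach the paper intends: the paper only cites this classical theorem without proof, but its proof of the $q$-analogue (\Cref{freeness for q-graphical arr}), which the authors describe as ``very similar'' to the classical one, uses exactly your template --- a supersolvable filtration built from a perfect elimination ordering, Jambu--Terao for supersolvable $\Rightarrow$ free, and Terao's factorization theorem applied to an induced cycle (reached by localization) for free $\Rightarrow$ chordal, with the loop closed by the cited Dirac/Fulkerson--Gross equivalence. Your modularity check via simpliciality, the passage of freeness to localizations, and the non-splitting of $(t-1)^{n}+(-1)^{n}(t-1)$ over $\mathbb{Z}$ for $n\geq 4$ are all sound.
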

%\begin{proof}
%$(1) \Rightarrow (2)$
%Let $(v_{1}, \dots, v_{\ell})$ be a perfect elimination ordering and $(x_{1}, \dots, x_{\ell})$ the corresponding coordinates. 
%Let $\mathcal{A}_{i}$ be the subarrangement of $\mathcal{A}_{G}$ consisting hyperplanes whose defining linear form contains $x_{1}, \dots x_{i}$. 
%Then the filtration 
%\begin{align*}
%\mathcal{A}_{1} \subseteq \mathcal{A}_{2} \subseteq \dots \subseteq \mathcal{A}_{\ell} = \mathcal{A}_{G}
%\end{align*}
%guarantees that $\mathcal{A}_{G}$ is supersolvable and $|\mathcal{A}_{i}\setminus \mathcal{A}_{i-1}| = |N_{G_{i}}(v_{i})|$. 
%
%$(2)\Rightarrow(3)$ 
%In general, every supersolvable arrangement is inductively free and hence free by Jambu-Terao. 
%
%$(3)\Rightarrow(4)$ 
%It suffices to show that $\chi(\mathcal{A}_{C_{\ell}},t)$ does not factor into the product of linear forms over $\mathbb{Z}$ when $\ell \geq 4$. 
%Since $\chi(\mathcal{A}_{C_{\ell}},t) = (t-1)^{\ell}+(-1)^{\ell}(t-1)$, we have 
%\begin{align*}
%\chi(\mathcal{A}_{C_{\ell}},x+1) = x^{\ell} + (-1)^{\ell}x = x(x^{\ell-1}+(-1)^{\ell}). 
%\end{align*}
%Since $x^{\ell}+(-1)^{\ell}$ has an imaginary root, $\chi(\mathcal{A}_{C_{\ell}},t)$ does not factor over $\mathbb{Z}$. 
%
%$(4)\Rightarrow (1)$
%This is proven by Dirac and Fulkerson-Gross. 
%\end{proof}

\begin{theorem}\label{freeness for q-graphical arr}
The following conditions are equivalent to the conditions in \Cref{graphical arrangement freeness}. 
\begin{enumerate}[label=(\arabic*)]
\setcounter{enumi}{4}
\item\label{q SS} $\mathcal{A}_{G}^{q}$ is supersolvable. 
\item\label{q free} $\mathcal{A}_{G}^{q}$ is free. 
\end{enumerate}
Moreover, when $(v_{1}, \dots, v_{\ell})$ is a perfect elimination ordering of $G$
\begin{align*}
\chi(\mathcal{A}_{G}^{q}, t) = \prod_{i=1}^{\ell}\left(t-q^{|N_{G_{i}}(v_{i})|}\right), 
\end{align*}
where $G_{i} \coloneqq G[\{v_{1}, \dots, v_{i}\}]$ and $N_{G_{i}}(v_{i})$ is the set of adjacent vertices of $v_{i}$ in $G_{i}$. 
\end{theorem}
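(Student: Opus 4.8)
The plan is to close the cycle of implications $\ref{PEO}\Rightarrow\ref{q SS}\Rightarrow\ref{q free}\Rightarrow\ref{chordal}$, where condition \ref{PEO} (existence of a perfect elimination ordering) and condition \ref{chordal} (chordality) are among the equivalent conditions of \Cref{graphical arrangement freeness}. The characteristic polynomial formula will drop out of the supersolvable filtration built in the first step.

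For $\ref{PEO}\Rightarrow\ref{q SS}$, assume $(v_1,\dots,v_\ell)$ is a perfect elimination ordering and relabel so that $v_i=i$. I would take the filtration $\mathcal{A}_i\coloneqq\mathcal{A}_{G_i}^q$, regarded as the subarrangement of $\mathcal{A}_G^q$ in $\mathbb{F}_q^\ell$ consisting of those hyperplanes whose defining clique lies in $\{1,\dots,i\}$. Since the coordinate hyperplanes $\{x_1=0\},\dots,\{x_i=0\}$ all lie in $\mathcal{A}_i$, we get $\rank\mathcal{A}_i=i$. The difference $\mathcal{A}_i\setminus\mathcal{A}_{i-1}$ consists exactly of the hyperplanes whose support contains $i$; as $i$ is simplicial in $G_i$, its neighborhood $N\coloneqq N_{G_i}(i)$ is a clique, so each such hyperplane normalizes to $\{x_i+\sum_{j\in N}a_jx_j=0\}$ with $(a_j)_{j\in N}\in\mathbb{F}_q^{N}$ arbitrary, giving $|\mathcal{A}_i\setminus\mathcal{A}_{i-1}|=q^{|N_{G_i}(v_i)|}$. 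For the defining condition of supersolvability, I would take two distinct such hyperplanes $\{x_i+\sum_j a_jx_j=0\}$ and $\{x_i+\sum_j a_j'x_j=0\}$ and subtract: their intersection is contained in $\{\sum_{j\in N}(a_j-a_j')x_j=0\}$, whose support is a nonempty subset of the clique $N\subseteq\{1,\dots,i-1\}$ and hence a hyperplane of $\mathcal{A}_{i-1}$. Thus the filtration is supersolvable, and the factorization of the characteristic polynomial of a supersolvable arrangement yields $\chi(\mathcal{A}_G^q,t)=\prod_{i=1}^\ell\bigl(t-q^{|N_{G_i}(v_i)|}\bigr)$. The implication $\ref{q SS}\Rightarrow\ref{q free}$ is then immediate from the general fact, already recalled via \cite{jambu1984free-aim}, that supersolvable arrangements are inductively free.

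The substantive step is $\ref{q free}\Rightarrow\ref{chordal}$, which I would prove by contraposition. If $G$ is not chordal it has an induced cycle $G[S]=C_k$ with $k\geq 4$. Consider the flat $X\coloneqq\bigcap_{a\in S}\{x_a=0\}\in L(\mathcal{A}_G^q)$ (an intersection of coordinate hyperplanes, which belong to $\mathcal{A}_G^q$ via the single-vertex cliques) and its localization $\mathcal{A}_X=\{H\in\mathcal{A}_G^q\mid X\subseteq H\}$. A direct check shows $H$ contains $X$ exactly when its defining form is supported on $S$, so $\mathcal{A}_X$ is precisely the family of hyperplanes arising from cliques of $G[S]=C_k$; discarding the dummy coordinates $x_j$ ($j\notin S$), which does not affect freeness, identifies $\mathcal{A}_X$ with $\mathcal{A}_{C_k}^q$. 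Since localizations of free arrangements are free (Orlik–Terao \cite{orlik1992arrangements}), it suffices to show $\mathcal{A}_{C_k}^q$ is not free. By \Cref{characteristic polynomial cycle graph},
\[
\chi(\mathcal{A}_{C_k}^q,t)=(t-q)\bigl[(t-q)^{k-1}+(-1)^k(q-1)^{k-1}\bigr],
\]
and the bracketed factor has roots $t=q+(q-1)\omega$ with $\omega^{k-1}=(-1)^{k+1}$. For $k\geq 4$ the $(k-1)$-th roots of $\pm1$ always contain non-real numbers, so $\chi(\mathcal{A}_{C_k}^q,t)$ admits a non-real root and cannot split into integer linear factors; by Terao's factorization theorem this obstructs freeness, yielding the desired contradiction.

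The hard part is this final implication. The two delicate points are the identification of the localization $\mathcal{A}_X$ with the cycle arrangement and the invocation that localizations inherit freeness, which is exactly what lets freeness of $\mathcal{A}_G^q$ descend to the forbidden induced cycle; once there, the explicit computation of \Cref{characteristic polynomial cycle graph} detects non-freeness through its non-real roots.
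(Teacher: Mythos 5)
Your proof is correct and follows essentially the same route as the paper: the same filtration by hyperplanes supported on $\{v_1,\dots,v_i\}$ for \ref{PEO}$\Rightarrow$\ref{q SS}, the supersolvable-implies-inductively-free fact for \ref{q SS}$\Rightarrow$\ref{q free}, and Terao's factorization theorem applied to the cycle polynomial of \Cref{characteristic polynomial cycle graph} for \ref{q free}$\Rightarrow$\ref{chordal}. You additionally spell out two steps the paper leaves implicit, namely the verification of the supersolvability condition via the difference of two normalized forms and the localization argument (with removal of dummy coordinates) that reduces a non-chordal $G$ to an induced cycle $\mathcal{A}_{C_k}^q$; both are carried out correctly.
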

\begin{proof}
The proof is very similar to the proof of Theorem \ref{graphical arrangement freeness}. 

To show $\ref{PEO}\Rightarrow\ref{q SS}$, 
let $(v_{1}, \dots, v_{\ell})$ be a perfect elimination ordering and $(x_{1}, \dots, x_{\ell})$ the corresponding coordinates. 
Let $\mathcal{A}_{i}$ be the subarrangement of $\mathcal{A}_{G}^{q}$ consisting hyperplanes whose defining linear form contains $x_{1}, \dots, x_{i}$. 
Then the filtration 
\begin{align*}
\mathcal{A}_{1} \subseteq \mathcal{A}_{2} \subseteq \dots \subseteq \mathcal{A}_{\ell} = \mathcal{A}_{G}^{q}
\end{align*}
guarantees that $\mathcal{A}_{G}^{q}$ is supersolvable and $|\mathcal{A}_{i}\setminus \mathcal{A}_{i-1}| = q^{|N_{G_{i}}(v_{i})|}$. 

The implication $\ref{q SS} \Rightarrow \ref{q free}$ follows since every supersolvable arrangement is inductively free.
To prove $\ref{q free}\Rightarrow\ref{chordal}$ 
It suffices to show that $\chi(\mathcal{A}_{C_{\ell}}^{q},t)$ does not factor into the product of linear forms over $\mathbb{Z}$ when $\ell \geq 4$ by Terao's factorization theorem. 
Since $\chi(\mathcal{A}_{C_{\ell}}^{q},t) = (t-q)^{\ell}+(-1)^{\ell}(q-1)^{\ell-1}(t-q)$ by \Cref{characteristic polynomial cycle graph}, we have 
\begin{align*}
\chi(\mathcal{A}_{C_{\ell}},t) = (q-1)^{\ell}(x^{\ell}+(-1)^{\ell}) = (q-1)^{\ell}x(x^{\ell-1}+(-1)^{\ell}), 
\end{align*}
where $x = \frac{t-q}{q-1}$. 
Since $x^{\ell}+(-1)^{\ell}$ has an imaginary root, $\chi(\mathcal{A}_{C_{\ell}}^{q},t)$ does not factor over $\mathbb{Z}$. 
\end{proof}

%%%%%%%%%%%%%%%%%%%%%%%

%%%%%%%%%%%%%%%%%%%%%%%

\section{Basis construction}\label{section:basis}

Let $G$ be a chordal graph with a perfect elimination ordering $(v_{1}, \dots, v_{\ell})$ and $(x_{1}, \dots, x_{\ell})$ the corresponding coordinates. 
Define the sets $C_{\geq k}$ and $E_{<k}$ by 
\begin{align*}
C_{\geq k} &\coloneqq \{k\} \cup \Set{i \in [\ell] | \text{There exists a path } v_{k}v_{j_{1}}\cdots v_{j_{n}}v_{i} \text{ such that } k < j_{1} < \dots < j_{n} < i}, \\
E_{<k} &\coloneqq \Set{j \in [\ell] | j < k \text{ and } \{v_{j},v_{k}\} \in E_{G}}. 
\end{align*}

Let $\Delta(x_{1}, \dots, x_{k})$ denote the Vandermonde determinant: 
\begin{align*}
\Delta(x_{1}, \dots, x_{k}) \coloneqq \begin{vmatrix}
1 & x_{1} & x_{1}^{2} & \dots & x_{1}^{k-1} \\
1 & x_{2} & x_{2}^{2} &  \dots & x_{k}^{k-1} \\
\vdots & \vdots & \vdots &  & \vdots \\
1 & x_{k} & x_{k}^{2} & \dots & x_{k}^{k-1}
\end{vmatrix} 
= \prod_{1 \leq i < j \leq k}(x_{j}-x_{i}). 
\end{align*}

When $E_{<k} = \{j_{1}, \dots, j_{m}\}$ with $j_{1} < \dots < j_{m}$, 
\begin{align*}
\Delta(E_{<k}) \coloneqq \Delta(x_{j_{1}}, \dots, x_{j_{m}}) 
\quad \text{ and } \quad
\Delta(E_{<k}, x_{i}) \coloneqq \Delta(x_{j_{1}}, \dots, x_{j_{m}}, x_{i}). 
\end{align*}

\begin{theorem}[{\cite[Theorem 4.1]{suyama2019vertex-weighted-dcg}}]
Let 
\begin{align*}
\theta_{k} \coloneqq \sum_{i \in C_{\geq k}}\dfrac{\Delta(E_{<k}, x_{i})}{\Delta(E_{<k})}\partial_{i} \qquad (1 \leq k \leq \ell). 
\end{align*}
Then $\{\theta_{1}, \dots, \theta_{\ell}\}$ forms a basis for $D(\mathcal{A}_{G})$. 
\end{theorem}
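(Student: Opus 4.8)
The plan is to apply Saito's criterion \cite[Theorem 4.19]{orlik1992arrangements}: the $\ell$ vector fields $\theta_1, \dots, \theta_\ell$ form a basis of $D(\mathcal{A}_G)$ as soon as they all lie in $D(\mathcal{A}_G)$ and the determinant of their coefficient matrix agrees, up to a nonzero constant, with the defining polynomial $Q(\mathcal{A}_G) = \prod_{\{i,j\}\in E_G}(x_i - x_j)$. The first thing I would record is that the coefficients simplify: by the product formula for the Vandermonde determinant,
\[
\frac{\Delta(E_{<k}, x_i)}{\Delta(E_{<k})} = \prod_{j \in E_{<k}}(x_i - x_j),
\]
so each $\theta_k = \sum_{i \in C_{\geq k}} \bigl(\prod_{j \in E_{<k}}(x_i - x_j)\bigr)\partial_i$ is an honest polynomial vector field, and the coefficient matrix $M = (m_{ki})$ has entries $m_{ki} = \prod_{j \in E_{<k}}(x_i - x_j)$ for $i \in C_{\geq k}$ and $m_{ki} = 0$ otherwise.

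Next I would verify $\theta_k \in D(\mathcal{A}_G)$, i.e.\ $(x_a - x_b) \mid \theta_k(x_a - x_b)$ for every edge $\{a,b\} \in E_G$. Writing $c_i$ for the coefficient of $\partial_i$, we have $\theta_k(x_a - x_b) = c_a - c_b$. If both $a, b \in C_{\geq k}$, this equals $P(x_a) - P(x_b)$ for the single polynomial $P(y) = \prod_{j\in E_{<k}}(y - x_j)$, hence is divisible by $x_a - x_b$; if neither endpoint lies in $C_{\geq k}$ it is $0$. The remaining case, exactly one endpoint (say $a$) in $C_{\geq k}$, reduces to the combinatorial claim that I regard as the heart of the proof: \emph{if $\{a,b\}\in E_G$, $a \in C_{\geq k}$ and $b \notin C_{\geq k}$, then $b \in E_{<k}$.} Granting this, $(x_a - x_b)$ divides $c_a = \prod_{j\in E_{<k}}(x_a - x_j)$ while $c_b = 0$, so $\theta_k(x_a - x_b) = c_a$ is divisible by $x_a - x_b$.

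To prove the claim I would argue along an increasing path $k = p_0 < p_1 < \dots < p_n = a$ realizing $a \in C_{\geq k}$, by descending induction on the path. First, $b > a$ is impossible, since appending $v_b$ would extend the path and put $b$ into $C_{\geq k}$; thus $b < a = p_n$. Now $v_{p_n}$ is simplicial in $G[\{v_1, \dots, v_{p_n}\}]$ by the perfect elimination ordering, and both $v_{p_{n-1}}$ and $v_b$ are earlier neighbours of it, so $\{v_b, v_{p_{n-1}}\}\in E_G$. Comparing indices, $b \geq p_{n-1}$ would force $b \in C_{\geq k}$ (either $b = p_{n-1}$ already lies on the path, or $b > p_{n-1}$ extends the truncated path), contradicting $b\notin C_{\geq k}$; hence $b < p_{n-1}$ and the same situation recurs with $p_{n-1}$ replacing $p_n$. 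Iterating down to $p_0 = k$ yields $\{v_b, v_k\}\in E_G$ with $b < k$, i.e.\ $b \in E_{<k}$. The main obstacle is precisely this repeated use of simpliciality to drag the edge $\{a,b\}$ back along the path, and bookkeeping the index comparisons is what makes chordality indispensable.

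Finally, for the determinant I would observe that $C_{\geq k} \subseteq \{k, k+1, \dots, \ell\}$ by definition, so $m_{ki} = 0$ whenever $i < k$, making $M$ upper triangular. Hence
\[
\det M = \prod_{k=1}^{\ell} m_{kk} = \prod_{k=1}^\ell \prod_{j \in E_{<k}}(x_k - x_j) = \prod_{\substack{\{i,j\}\in E_G \\ i<j}}(x_j - x_i),
\]
which is a nonzero scalar multiple of $Q(\mathcal{A}_G)$. Since each $\theta_k$ lies in $D(\mathcal{A}_G)$ and $\det M \doteq Q(\mathcal{A}_G)$, Saito's criterion shows that $\{\theta_1, \dots, \theta_\ell\}$ is a basis of $D(\mathcal{A}_G)$.
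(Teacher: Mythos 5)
Your proof is correct and follows essentially the approach the paper takes: the paper itself states this theorem as a citation (to Suyama et al.) without proof, but its proof of the $q$-analogue (\Cref{basis for q-graphical arr}) has exactly the skeleton you use --- show each $\theta_k$ lies in $D(\mathcal{A})$ by analyzing how a clique meets $C_{\geq k}$ and $E_{<k}$, then invoke Saito's criterion. The only differences are that you supply two details the paper leaves implicit: a full proof of the key combinatorial claim (the paper's version, stated for general cliques and linear forms as required in the $q$-setting, is dismissed with ``one can show''), and the explicit determinant computation via upper-triangularity of the coefficient matrix.
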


Let $\Delta_{q}(x_{1}, \dots, x_{k}) \in \mathbb{F}_{q}[x_{1}, \dots, x_{k}]$ denote the determinant of the Moore matrix. 
Namely 
\begin{align*}
\Delta_{q}(x_{1}, \dots, x_{k}) = \begin{vmatrix}
x_{1} & x_{1}^{q} & x_{1}^{q^{2}} & \dots & x_{1}^{q^{k-1}} \\
x_{2} & x_{2}^{q} & x_{2}^{q^{2}} &  \dots & x_{k}^{q^{k-1}} \\
\vdots & \vdots & \vdots &  & \vdots \\
x_{k} & x_{k}^{q} & x_{k}^{q^{2}} & \dots & x_{k}^{q^{k-1}}
\end{vmatrix} 
= \prod_{i = 1}^{k}\prod_{c_{1}, \dots, c_{i-1} \in \mathbb{F}_{q}}(c_{1}x_{1} + \dots + c_{i-1}x_{i-1} + x_{i}). 
\end{align*}

\begin{theorem}\label{basis for q-graphical arr}
Let 
\begin{align*}
\theta_{k} \coloneqq \sum_{i \in C_{\geq k}}\dfrac{\Delta_{q}(E_{<k}, x_{i})}{\Delta_{q}(E_{<k})}\partial_{i} \qquad (1 \leq k \leq \ell). 
\end{align*}
Then $\{\theta_{1}, \dots, \theta_{\ell}\}$ forms a basis for $D(\mathcal{A}_{G}^{q})$.
\end{theorem}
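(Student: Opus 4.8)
The plan is to verify the hypotheses of Saito's criterion \cite[Theorem 4.19]{orlik1992arrangements}. Writing $\theta_k=\sum_i M_{ik}\partial_i$, so that $M_{ik}=\Delta_q(E_{<k},x_i)/\Delta_q(E_{<k})$ for $i\in C_{\geq k}$ and $M_{ik}=0$ otherwise, it suffices to show (a) each $\theta_k$ lies in $D(\mathcal{A}_G^q)$, and (b) the coefficient matrix $M=(M_{ik})$ satisfies $\det M\doteq Q(\mathcal{A}_G^q)$ (equality up to a nonzero constant). Throughout I would use the product expansion of the Moore determinant, which gives
\begin{align*}
M_{ik}=\frac{\Delta_q(E_{<k},x_i)}{\Delta_q(E_{<k})}=\prod_{(c_j)_{j\in E_{<k}}\in\mathbb{F}_q^{E_{<k}}}\Bigl(x_i+\sum_{j\in E_{<k}}c_jx_j\Bigr).
\end{align*}
This shows each $M_{ik}$ is a genuine polynomial, and, setting $\phi_k(Y):=\prod_{(c_j)}\bigl(Y+\sum_{j}c_jx_j\bigr)$ so that $M_{ik}=\phi_k(x_i)$, the crucial point is that $\phi_k$ is an $\mathbb{F}_q$-linearized (additive) polynomial in its argument: $\phi_k(Y_1+Y_2)=\phi_k(Y_1)+\phi_k(Y_2)$ and $\phi_k(aY)=a\,\phi_k(Y)$ for $a\in\mathbb{F}_q$, since its roots form the $\mathbb{F}_q$-span of $\{x_j:j\in E_{<k}\}$.

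Part (b) I expect to be routine. Because every element of $C_{\geq k}$ is $\geq k$, the matrix $M$ is lower triangular in the perfect elimination ordering, so
\begin{align*}
\det M=\prod_{k=1}^{\ell}\phi_k(x_k)=\prod_{k=1}^{\ell}\prod_{(c_j)\in\mathbb{F}_q^{E_{<k}}}\Bigl(x_k+\sum_{j\in E_{<k}}c_jx_j\Bigr).
\end{align*}
Grouping the hyperplanes of $\mathcal{A}_G^q$ by their top variable, exactly as in the supersolvable filtration used to prove \Cref{freeness for q-graphical arr}, identifies this product with $Q(\mathcal{A}_G^q)$ up to a nonzero scalar, so the determinant condition holds.

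The heart of the matter is part (a): for every clique $C$ and every defining form $\alpha=\sum_{i\in C}a_ix_i$ of $\mathcal{A}_G^q$, I must show $\theta_k(\alpha)\in(\alpha)$. Since $\theta_k$ involves only $\partial_i$ with $i\in C_{\geq k}$ and the $a_i$ lie in $\mathbb{F}_q$, the $\mathbb{F}_q$-linearity of $\phi_k$ collapses the value to
\begin{align*}
\theta_k(\alpha)=\sum_{i\in C\cap C_{\geq k}}a_i\phi_k(x_i)=\phi_k(\beta),\qquad \beta:=\sum_{i\in C\cap C_{\geq k}}a_ix_i.
\end{align*}
If $C\cap C_{\geq k}=\varnothing$ this is $0$; otherwise, as $\phi_k(\beta)=\prod_{(c_j)}\bigl(\beta+\sum_{j\in E_{<k}}c_jx_j\bigr)$, it suffices to exhibit $\alpha$ itself among these linear factors. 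This reduces everything to the combinatorial claim that whenever a clique $C$ meets $C_{\geq k}$ one has $C\setminus C_{\geq k}\subseteq E_{<k}$: granting it, $\alpha=\beta+\sum_{i\in C\setminus C_{\geq k}}a_ix_i$ is the factor of $\phi_k(\beta)$ obtained with $c_i=a_i$ for $i\in C\setminus C_{\geq k}$ and $c_j=0$ otherwise, and no cancellation occurs because $C\cap C_{\geq k}$ lies in $\{\,\cdot\geq k\,\}$ while $E_{<k}$ lies in $\{\,\cdot<k\,\}$.

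The main obstacle is therefore the combinatorial lemma $C\setminus C_{\geq k}\subseteq E_{<k}$, which is where chordality enters and which parallels the analysis of \cite[Theorem 4.1]{suyama2019vertex-weighted-dcg}. I would prove it by descent along an increasing path: given $i\in C\setminus C_{\geq k}$ and $m\in C\cap C_{\geq k}$, the adjacency $v_i\sim v_m$ together with a sublemma (if $i>k$ and $v_i$ is adjacent to a vertex of $C_{\geq k}$, then $i\in C_{\geq k}$, proved by repeatedly using simpliciality of $v_m$ in $G_m$ to shortcut down the path) forces $i<k$; a second descent, again using simpliciality along the increasing path witnessing $m\in C_{\geq k}$ down to its first edge $v_kv_{j_1}$, forces $v_i\sim v_k$, so $i\in E_{<k}$. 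Assembling (a) and (b) and invoking Saito's criterion then shows that $\{\theta_1,\dots,\theta_\ell\}$ is a basis for $D(\mathcal{A}_G^q)$.
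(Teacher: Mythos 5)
Your proposal is correct and takes essentially the same route as the paper: Saito's criterion, with $\theta_k(\alpha)\in(\alpha)$ obtained from the $\mathbb{F}_q$-linearity of the Moore determinant (your $\phi_k$) together with the combinatorial fact that a clique meeting $C_{\geq k}$ has its remaining vertices inside $E_{<k}$. You additionally supply two details the paper leaves implicit --- the lower-triangular determinant computation identifying $\det M$ with $Q(\mathcal{A}_{G}^{q})$ up to a scalar, and the chordality/simpliciality descent proving the clique-splitting lemma (which the paper dispatches with ``one can show'') --- and both check out.
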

\begin{proof}
Let $K = \{i_{1}, \dots, i_{m}\}$ be a clique of $G$ with $i_{1} < \dots < i_{m}$. 
Let $\alpha = c_{1}x_{i_{1}} + c_{2}x_{i_{2}} + \dots + c_{m}x_{i_{m}} $ be a nonzero linear form over $\mathbb{F}_{q}$. 
If $K \cap C_{\geq k} = \varnothing$, then $\theta_{k}(\alpha) = 0$. 

Suppose that $K \cap C_{\geq k} \neq \varnothing$ and take $i_{s} \in K \cap C_{\geq k}$ with minimal $s$. 
Then one can show that $\{i_{1}, \dots, i_{s-1}\} \subseteq E_{<k}$ and $\{i_{s}, i_{s+1}, \dots, i_{m}\} \subseteq C_{\geq k}$. 
Then 
\begin{align*}
\theta_{k}(\alpha) 
&= \sum_{u=s}^{k}\dfrac{\Delta_{q}(E_{<k},x_{i_{u}})}{\Delta_{q}(E_{<k})} \cdot c_{i_{u}} 
=\dfrac{\Delta_{q}(E_{<k}, \ c_{i_{s}}x_{i_{s}} + \dots + c_{i_{m}}x_{i_{m}})}{\Delta_{q}(E_{<k})} 
= \dfrac{\Delta_{q}(E_{<k}, \ \alpha)}{\Delta_{q}(E_{<k})} \in (\alpha). 
\end{align*}
Using Saito's criterion \cite[Theorem 4.19]{orlik1992arrangements}, we can prove $\{\theta_{1}, \dots, \theta_{\ell}\}$ is a basis. 
%Since the Saito's matrix $M(\theta_{1}, \dots, \theta_{\ell})$ is triangular, it is easy to show that $\det M(\theta_{1}, \dots, \theta_{\ell}) \doteq Q(\mathcal{A}_{G}^{q})$. 
\end{proof}

\section*{Acknowledgement}
S. T. was supported by JSPS KAKENHI, Grant Number JP22K13885. 
R. U. was supported by JST SPRING, Grant Number JPMJSP2138.
M. Y. was partially supported by JSPS KAKENHI, Grant Number JP23H00081.

\printbibliography

\end{document}